\newcommand{\produitscalaire}[2]{\langle{#1}\,,\,{#2}\rangle}
\theoremstyle{plain}                    
\newtheorem{theoreme}{Theorem}
\newtheorem*{theoreme*}{Theorem}
\newtheorem{corollaire}{Corollary}
\newtheorem{lemme}{Lemma}
\theoremstyle{definition} 
\newtheorem*{remarque}{Remark}
\title{On Gaussian Brunn-Minkowski inequalities}%\\ WORKING DRAFT - DO NOT DISTRIBUTE}
\author{Franck Barthe and Nolwen Huet}
\date{\today}
\begin{document}
\maketitle

\begin{abstract}
   In this paper, we are interested in Gaussian versions of the classical
Brunn-Minkowski inequality. We prove in a  streamlined way a semigroup version
of the Ehrard inequality for $m$ Borel or convex sets based on a previous work
by Borell. Our method also  allows us  to have semigroup proofs of the geometric
Brascamp-Lieb inequality and of the reverse one which follow exactly the same
lines.
\end{abstract}

\noindent{\em 2000 Mathematics Subject Classification}: 60E15, 60G15, 52A40, 35K05.

\noindent{\em Keywords}: Brunn-Minkowski, Gaussian measure, Heat equation, Brascamp-Lieb inequalities.

\section{Introduction}

In this paper, we are interested in Gaussian versions of the classical Brunn-Minkowski inequality on the Lebesgue measure 
of sum-sets (see e.g. \cite{pisiVCBB,schnBMT}). On $\mathbb R^n$ with its canonical Euclidean structure $(\produitscalaire{\cdot}{\cdot},|\cdot|)$
we consider the standard Gaussian measure $\gamma_n(dx)= (2\pi)^{-n/2} \exp(-|x|^2/2) \, dx$, $x\in \mathbb R^n$.
Given $\alpha,\beta\in \mathbb R$ and sets $A,B\subset \mathbb R^n$, we recall that their Minkowski combination is defined by
  $$ \alpha A+\beta B =\{ \alpha a+\beta b;\; (a,b)\in A\times B \}.$$
Using symmetrization techniques, Ehrhard  \cite{EhrhardGBM} proved a sharp lower bound on  the Gaussian measure
 of a convex combination of convex sets.
Namely: if $\alpha,\beta\ge 0$ satisfy $\alpha+\beta=1$ and if $A,B\subset \mathbb R^n$ are convex, then
\[
\Phi^{-1}\circ\gamma_n(\alpha A+\beta B)\geq \alpha \Phi^{-1}\circ\gamma_n(A)+\beta\Phi^{-1}\circ\gamma_n(B),
\]  
where  $\Phi$ is the cumulative distribution function of $\gamma_1$. This inequality becomes an equality when 
$A$ and $B$ are parallel half-spaces or the same convex set. Lata\l{}a  \cite{LatalaGBM} showed that the inequality remains valid when $A$
is convex and $B$ is an arbitrary Borel set. In the remarkable paper \cite{BorellGBM}, Borell was able to remove the
remaining convexity assumption. He actually derived a functional version of the inequality (in the spirit of the 
Prékopa-Leindler inequality) by a wonderful interpolation technique based on the heat equation. In a series of papers,
Borell extended the inequality to more general combinations:

\begin{theoreme*}[Borell \cite{BorellGBMm}] Let $\alpha_1,\ldots,\alpha_m>0$. The  inequality 
\begin{equation}\label{gbmi} %\tag{$\gbmi$}
\Phi^{-1}\circ\gamma_n\Big({\textstyle\sum}\alpha_i A_i\Big)\geq  \sum \alpha_i \Phi^{-1}\circ\gamma_n(A_i)
\end{equation}
 holds for all Borel sets $A_1,\ldots,A_m$  in $\mathbb{R}^n$ if and only if 
\begin{equation*}
\sum \alpha_i \geq 1 \quad \text{ and } \quad \forall j,\ \alpha_j-\sum_{i\neq j}\alpha_i \leq 1.
\end{equation*}
Moreover, it holds for all convex sets $A_1,\ldots,A_m$ in $\mathbb{R}^n$ if and only if 
\[
\sum \alpha_i \geq 1.
\]
\end{theoreme*}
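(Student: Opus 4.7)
The proof naturally splits into two halves: \emph{necessity} of the arithmetic conditions on $(\alpha_i)$, and \emph{sufficiency} via a semigroup interpolation. For necessity, one probes (\ref{gbmi}) with simple test families. Taking $A_1=\cdots=A_m$ equal to a common Borel set $A$ collapses (\ref{gbmi}) to the one-set dilation inequality $\Phi^{-1}(\gamma_n((\sum_i\alpha_i)A))\ge(\sum_i\alpha_i)\Phi^{-1}(\gamma_n(A))$; feeding in sets $A$ of very small Gaussian measure and exploiting the tail asymptotics $\Phi^{-1}(t)\sim-\sqrt{2\log(1/t)}$ as $t\to 0^+$ forces $\sum_i\alpha_i\ge 1$. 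For the extra condition $\alpha_j\le 1+\sum_{i\neq j}\alpha_i$ in the Borel case, one selects a pathological $A_j$---for instance a dense Borel null set---so that $\alpha_j A_j+\sum_{i\neq j}\alpha_i A_i$ covers $\mathbb R^n$ while $\Phi^{-1}(\gamma_n(A_j))=-\infty$; scaling the remaining sets appropriately then produces the required arithmetic constraint. Such pathologies are non-convex, which is why this condition disappears in the convex case.

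The bulk of the work is sufficiency. Following Borell, introduce the Ornstein--Uhlenbeck semigroup $P_tf(x)=\int f(e^{-t}x+\sqrt{1-e^{-2t}}\,y)\,d\gamma_n(y)$, whose generator is $L=\Delta-x\cdot\nabla$. After reducing (by approximation) to the case of $A_i$ with Gaussian measure in $(0,1)$ and setting $C=\sum_i\alpha_i A_i$, define
\begin{equation*}
v_i(t,x):=\Phi^{-1}(P_t\mathbf 1_{A_i}(x)),\qquad v(t,x):=\Phi^{-1}(P_t\mathbf 1_C(x)).
\end{equation*}
These functions are smooth on $(0,\infty)\times\mathbb R^n$ and a direct chain-rule computation shows they satisfy the nonlinear PDE $\partial_tw=Lw-w|\nabla w|^2$ (using $\Phi''/\Phi'=-\mathrm{id}$). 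The plan is to establish the pointwise interpolation
\begin{equation*}
v\Bigl(t,{\textstyle\sum_i}\alpha_ix_i\Bigr)\ge\sum_i\alpha_iv_i(t,x_i),\qquad\forall\,t>0,\ x_i\in\mathbb R^n,
\end{equation*}
and then let $t\to\infty$: since $P_t\mathbf 1_A\to\gamma_n(A)$ pointwise, this limit is exactly (\ref{gbmi}).

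The heart of the argument is a parabolic comparison principle for the deficit $F(t;x_1,\ldots,x_m):=v(t,\sum_i\alpha_ix_i)-\sum_i\alpha_iv_i(t,x_i)$. Differentiating in $t$ and using the PDE satisfied by $v$ and the $v_i$ (with the chain rule producing factors of $\alpha_i$ in the first term) yields an identity of the form $\partial_tF-\mathcal L F=R$, where $\mathcal L$ is a suitable elliptic operator in $(x_1,\ldots,x_m)$ and $R$ is a quadratic form in the gradients $\nabla v(t,\sum_i\alpha_ix_i)$ and $\nabla v_i(t,x_i)$. The main technical step---and the genuine obstacle---is to show that the arithmetic hypotheses $\sum_i\alpha_i\ge 1$ and $\alpha_j\le 1+\sum_{i\neq j}\alpha_i$ are precisely what is required to render $R$ nonnegative at every point where $F=0$, so that a maximum principle prevents $F$ from becoming negative. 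The initialization $t\to 0^+$ is benign: the right-hand side diverges to $-\infty$ unless every $x_i$ lies in $A_i$, in which case $\sum_i\alpha_ix_i\in C$ and both sides diverge to $+\infty$.

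For the convex case, the $A_i$ and $C$ being convex forces $v_i(t,\cdot)$ and $v(t,\cdot)$ to inherit a concavity property---essentially the two-set Ehrhard inequality applied at the semigroup level---which constrains the gradients $\nabla v_i$ geometrically. This extra rigidity neutralizes the ``bad'' direction of the quadratic form $R$, so the condition $\alpha_j\le 1+\sum_{i\neq j}\alpha_i$ becomes superfluous and only $\sum_i\alpha_i\ge 1$ is needed to close the argument.
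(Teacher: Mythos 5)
Your sufficiency strategy mirrors the paper's (and Borell's) in outline: introduce a Gaussian semigroup, transform via $\Phi^{-1}$ to obtain the nonlinear PDE $\partial_t w = Lw - w\,|\nabla w|^2$, define the deficit $F$, and invoke a parabolic maximum principle. Your choice of the Ornstein--Uhlenbeck semigroup with $t\to\infty$ versus the paper's heat semigroup with $t=1$ is only a reparametrization and is harmless. However, the step you flag as ``the genuine obstacle''---showing that the arithmetic conditions on $(\alpha_i)$ are exactly what make the forcing term controllable---\emph{is} the proof, and you do not carry it out. The paper splits $2\partial_t C = \mathcal S + \mathcal P$ into a second-order piece $\mathcal S = \Delta H - \sum \alpha_i \Delta F_i$ and a first-order piece $\mathcal P = -\bigl(H|\nabla H|^2 - \sum \alpha_i F_i |\nabla F_i|^2\bigr)$. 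The first-order piece is nonnegative for \emph{any} $\alpha$ once $\nabla C=0$ and $C\le 0$ hold, so the arithmetic condition on $\alpha$ has nothing to do with $\mathcal P$. It enters only through $\mathcal S$: one must exhibit an elliptic operator $\mathcal E = \nabla^* (B\otimes I_n)\nabla$ with $B\succeq 0$ such that $\mathcal E C = \mathcal S$, which after unwinding requires vectors $v_1,\dots,v_m$ with $|v_i|=1$ and $|\sum\alpha_i v_i|=1$; Lemma~3 of the paper identifies the image of $\varphi(v)=|\sum\alpha_i v_i|$ as precisely $[\max(0,\max_j(\alpha_j-\sum_{i\ne j}\alpha_i)),\sum\alpha_i]$, which is exactly why the stated conditions on $\alpha$ are needed. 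Your description of the forcing as ``a quadratic form in the gradients'' conflates the two pieces and misses where the hypothesis is actually used; for the convex case the relaxation comes from $\Delta F_i\le 0$ (concavity propagated by the semigroup) allowing $|v_i|\le 1$ instead of $|v_i|=1$, not from a constraint on the gradients.

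The necessity argument for the second condition is flawed as written. If you take $A_j$ with $\gamma_n(A_j)=0$, then $\Phi^{-1}(\gamma_n(A_j))=-\infty$, so by the paper's convention $\infty-\infty=-\infty$ the right-hand side of \eqref{gbmi} is $-\infty$ and the inequality is \emph{trivially satisfied}, not violated. A measure-zero $A_j$ can never produce a counterexample. The correct construction, implicit in the paper's step~d, is essentially the opposite: take $A_j$ to be the \emph{complement} of a small convex set and $A_i$ ($i\neq j$) to be small convex sets; one then checks (as the paper does in functional form with $g=1-f$ for an even $\Phi^{-1}$-concave bump $f$) that the hypothesis is satisfied but the conclusion fails precisely when $\alpha_j-\sum_{i\neq j}\alpha_i>1$. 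Your test for $\sum\alpha_i\ge 1$ does work, provided you take $A$ \emph{convex} so that $\sum_i\alpha_i A=(\sum_i\alpha_i)A$; for general Borel $A$ the collapse to a dilation does not hold.
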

Borell  established  the case $m=2$ for Borel sets in \cite{BorellGBM2} thanks to his semigroup argument.
 His proof in \cite{BorellGBMm} of the general case relies on a tricky and somewhat complicated induction.
Remark that a linear combination of Borel sets need not be a Borel set; however it is analytic or Suslin, hence universally
measurable, see e.g. \cite{fedeGMT}.

In this note we give a slight extension of the above statement (which can actually be derived directly from the theorem of Borell,  as pointed out by the referee). More importantly we propose a streamlined version
of the semigroup argument for $m$ functions directly, which allows to take advantage of convexity type assumptions.
This better understanding of the semigroup technique also allows to study more general situations. 
The main result is stated next. It involves the heat semigroup, for which we recall the definition: given a Borel nonnegative function $f$ on $\mathbb R^n$, its evolute at time $t\ge 0$ is the function $P_tf$ given by
\[
P_t f(x)=\int f\big(x+ \sqrt{t} \, y\big) \,\gamma_n(dy)=\mathrm{E}\big(f(x+B_t)\big)
\]
where $B$ is an $n$-dimensional Brownian motion. By convention $\infty-\infty=-\infty$ so that inequalities like Inequality \eqref{gbmi}, or the one introduced in the next theorem, make sense.
 
\begin{theoreme}\label{main_theorem}
Let $I_{conv}\subset\{1,\ldots,m\}$,  $\alpha_1,\ldots,\alpha_m>0$. 
The following assertions are equivalent: 
\begin{enumerate}
\item The parameter $\alpha$ satisfies
\begin{equation}\label{alpha}
\sum \alpha_i \geq 1 \quad \text{ and } \quad \forall j\notin I_{conv},\ \alpha_j-\sum_{i\neq j}\alpha_i \leq 1.
\end{equation}
\item For all Borel sets $A_1,\ldots,A_m$ in $\mathbb{R}^n$ such that  $A_i$ is convex when $i\in I_{conv}$,
\begin{equation*}
\Phi^{-1}\circ\gamma\Big({\textstyle\sum}\alpha_i A_i\Big)\geq  \sum \alpha_i \Phi^{-1}\circ\gamma(A_i)
\end{equation*}
\item For all Borel functions $h,f_1,\ldots,f_m$ from $\mathbb{R}^n$ to  $[0,1]$ such that $\Phi^{-1} \circ f_i$ is concave  when $i\in I_{conv}$, if
\begin{equation*}
\forall x_1,\ldots, x_m \in\mathbb{R}^n,\quad \Phi^{-1}\circ h\big({\textstyle\sum}\alpha_i x_i\big)\geq \sum \alpha_i \Phi^{-1}\circ f_i(x_i),
\end{equation*}
then
\begin{equation*}
\Phi^{-1} \left( \int h \, d\gamma \right) \geq \sum \alpha_i  \Phi^{-1}\left( \int f_i \, d\gamma \right).
\end{equation*}
\item For all Borel functions $h,f_1,\ldots,f_m$ from $\mathbb{R}^n$ to  $[0,1]$ such that $\Phi^{-1} \circ f_i$ is concave  when $i\in I_{conv}$, if
\begin{equation*}
\forall x_1,\ldots,x_m \in\mathbb{R}^n,\quad \Phi^{-1}\circ h\big({\textstyle\sum}\alpha_i x_i\big)\geq \sum \alpha_i \Phi^{-1}\circ f_i(x_i),
\end{equation*}
then for all $t\ge 0$
\begin{equation*}
\forall x_1,\ldots, x_m
 \in\mathbb{R}^n,\quad \Phi^{-1}\circ P_t h\big({\textstyle\sum}\alpha_i x_i\big)\geq \sum \alpha_i \Phi^{-1}\circ P_t f_i(x_i).
\end{equation*}
\end{enumerate}
\end{theoreme}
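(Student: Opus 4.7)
The plan is to close the cycle $(4)\Rightarrow(3)\Rightarrow(2)\Rightarrow(1)\Rightarrow(4)$; the first three implications are routine and the main work is in the final one.

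For $(4)\Rightarrow(3)$, specialise $(4)$ to $t=1$ and $x_1=\cdots=x_m=0$, using $P_1w(0)=\int w\,d\gamma_n$. For $(3)\Rightarrow(2)$, apply $(3)$ to $f_i=\mathbf{1}_{A_i}$ and $h=\mathbf{1}_{\sum\alpha_iA_i}$; with the convention $\infty-\infty=-\infty$ the pointwise hypothesis is automatic, $\Phi^{-1}\circ\mathbf{1}_{A_i}$ is concave in the extended sense whenever $A_i$ is convex, and the integrated conclusion is exactly $(2)$. For $(2)\Rightarrow(1)$, test $(2)$ with parallel half-spaces: large shifts force $\sum\alpha_i\geq 1$, while the constraint $\alpha_j-\sum_{i\neq j}\alpha_i\leq 1$ for $j\notin I_{conv}$ is obtained by placing a thin non-convex slab in position $j$ against very large half-spaces in the other positions, a test configuration that is precisely forbidden when $j\in I_{conv}$.

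The main implication is $(1)\Rightarrow(4)$. After standard regularisation (mollification and truncation of the ranges of $f_i,h$ into some $[\delta,1-\delta]$), the functions $u_i(t,x):=\Phi^{-1}\circ P_tf_i(x)$ and $v(t,x):=\Phi^{-1}\circ P_th(x)$ are smooth, and a chain-rule computation using $\phi'(u)/\phi(u)=-u$ shows that they both satisfy the nonlinear heat equation
\[
\partial_t w=\tfrac12\Delta w-\tfrac{w}{2}|\nabla w|^2.
\]
Set
\[
W(t,x_1,\ldots,x_m):=v\big(t,{\textstyle\sum}\alpha_ix_i\big)-\sum\alpha_iu_i(t,x_i),
\]
nonnegative at $t=0$ by hypothesis. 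A maximum-principle argument reduces the task to showing $\partial_t W\geq 0$ at any spatial minimiser $(x_1^*,\ldots,x_m^*)$ where $W=0$. First-order optimality forces all the gradients to coincide: $\nabla v\big(t,\sum\alpha_ix_i^*\big)=\nabla u_i(t,x_i^*)=:g$ for every $i$; substituting into the PDE, the nonlinear terms assemble into $\tfrac{|g|^2}{2}W$ and vanish at the minimiser, leaving only the trace inequality
\[
\Delta v\big(t,{\textstyle\sum}\alpha_ix_i^*\big)\geq\sum\alpha_i\Delta u_i(t,x_i^*)
\]
to verify.

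This is the decisive technical step. The second-order optimality at the minimiser asserts that the block matrix $(\alpha_i\alpha_jV-\alpha_i\delta_{ij}U_i)_{i,j}$ is positive semi-definite, where $V$ and $U_i$ are the Hessians of $v$ and $u_i$; testing against pairs of correlated Gaussian vectors yields the family
\[
\big(\alpha^\top\Sigma\alpha\big)\,\mathrm{tr}(V)\geq\sum\alpha_i\Sigma_{ii}\,\mathrm{tr}(U_i)
\]
of inequalities, one for each $m\times m$ correlation matrix $\Sigma$. The main obstacle is to combine these with \eqref{alpha} to extract $\mathrm{tr}(V)\geq\sum\alpha_i\mathrm{tr}(U_i)$. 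Rank-one choices $\Sigma_{ij}=\epsilon_i\epsilon_j$ reduce the issue to a sign-selection problem on $\epsilon\in\{\pm 1\}^m$ with $(\sum\epsilon_i\alpha_i)^2\leq 1$ and $\epsilon_i$ matching the sign of $\mathrm{tr}(U_i)$, which is exactly what $\sum\alpha_i\geq 1$ together with $\alpha_j-\sum_{i\neq j}\alpha_i\leq 1$ for $j\notin I_{conv}$ guarantee. For $i\in I_{conv}$ one uses additionally that $P_t$ preserves concavity of $\Phi^{-1}\circ f_i$, a consequence of Ehrhard-type functional inequalities, so that $U_i\preceq 0$ and the relevant sign is automatic; this is what makes the second condition of \eqref{alpha} unnecessary on those indices.
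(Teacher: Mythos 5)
Your overall cycle $(4)\Rightarrow(3)\Rightarrow(2)\Rightarrow(1)\Rightarrow(4)$ is a reasonable plan, and the first two implications and the maximum-principle framework for $(1)\Rightarrow(4)$ match the paper in spirit. But the decisive second-order step of $(1)\Rightarrow(4)$ has a genuine gap, and it is precisely the point where the paper needs a nontrivial lemma.

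You reduce the trace inequality to finding a positive semi-definite $\Sigma$ with $\Sigma_{ii}=1$ and $\alpha^\top\Sigma\alpha=1$, and then restrict to rank-one matrices $\Sigma_{ij}=\epsilon_i\epsilon_j$ with $\epsilon\in\{\pm1\}^m$. This restriction loses too much. For the identity
\[
\big(\alpha^\top\Sigma\alpha\big)\,\mathrm{tr}(V)-\sum_i\Sigma_{ii}\,\alpha_i\,\mathrm{tr}(U_i)=\mathrm{tr}\big((\Sigma\otimes I_n)\,\mathrm{Hess}(W)\big)\ge 0
\]
to yield $\mathrm{tr}(V)\ge\sum\alpha_i\mathrm{tr}(U_i)$ you need $\alpha^\top\Sigma\alpha=1$ exactly (an inequality $\le 1$ is useless unless you know $\mathrm{tr}(V)\ge 0$, which the optimality conditions do not give you). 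Over sign vectors, $\alpha^\top\Sigma\alpha=\big(\sum\epsilon_i\alpha_i\big)^2$ takes only finitely many values, and in general none of them equals $1$: already for $m=2$, $\alpha=(1,1)$ the achievable values are $\{0,4\}$, and for $\alpha=(3,3,3)$ (which satisfies condition $(1)$ with $I_{conv}=\emptyset$) the achievable values are $\{9,81\}$, all strictly larger than $1$, so no sign choice even gives $\le 1$. The paper avoids this by allowing arbitrary positive semi-definite $\Sigma=B$ with unit diagonal, equivalently vectors $v_1,\ldots,v_m\in\mathbb{R}^m$ with $|v_i|=1$ (or $\le 1$ on $I_{conv}$); Lemma~\ref{lemma:imphiconcave} then shows that as the $v_i$ range over the sphere/ball the quantity $\big|\sum\alpha_iv_i\big|$ sweeps an entire interval, whose endpoints are exactly governed by condition~\eqref{alpha}. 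The passage from scalars $\pm1$ to vectors in $\mathbb{R}^m$ is what turns a discrete (and generally unsolvable) sign-selection problem into a continuous one with the right answer; this is the key lemma your proposal is missing.

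Two smaller points. First, you dispatch measurability by ``standard regularisation (mollification and truncation)'', but mollification generally destroys the pointwise hypothesis $W(0,\cdot)\ge0$, so it is not a harmless preprocessing step; the paper instead proves the result first for smooth data, deduces the set inequality~$(2)$ in dimension $n+1$, and then recovers the Borel functional version in dimension $n$ via the subgraph sets $B_f^{t,x}=\{(u,y):u\le\Phi^{-1}\circ f(x+\sqrt t\,y)\}$, which satisfy $\gamma_{n+1}(B_f^{t,x})=P_tf(x)$. Second, your $(2)\Rightarrow(1)$ by ``thin non-convex slab'' is plausible but not written out; the paper proves $(3)\Rightarrow(1)$ (or $(4)\Rightarrow(1)$) with explicit even functions $f=\Phi(1-|ax|^2)$ and $g=1-f$, which is cleaner and would need to be adapted if you insist on a set-level counterexample. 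Your appeal to $P_t$ preserving $\Phi^{-1}$-concavity is correct, but note that in the paper it is Corollary~\ref{cor:concave}, itself a consequence of the unconstrained case of the theorem you are trying to prove, so one must be careful not to make the argument circular.
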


\begin{remarque}
   Condition \eqref{alpha} can be rephrased as 
$$\displaystyle \sum \alpha_i \ge \max\big(1, \max\{ 2 \alpha_j-1;\;j\not\in I_{conv} \}\big).$$
 Actually the condition will come up in our argument
in the following geometric form: there exist vectors $u_1,\ldots,u_m \in \mathbb R^m$ such that for all $i\in I_{conv}$, $|u_i|\le 1$,
for all $i\not\in I_{conv}$, $|u_i|=1$, and $|\sum \alpha_i u_i|=1$.
\end{remarque}

In the next section we show that the condition on $\alpha$ implies the fourth (and formally strongest) assumption in
the latter theorem, when restricted to smooth enough functions. The third section completes the proof of the theorem.
In the final section we discuss related problems.

Before going further, let us introduce some notation. 
\begin{itemize}
\item   We consider functions depending on a time variable $t$ and a space variable $x$. The time derivative is denoted by $\partial_t$, while the gradient, Hessian, and Laplacian in $x$ are denoted by $\nabla_x$, $\mathrm{Hess\,}_x$, and $\Delta_x$, omitting the index $x$ when 
there is no ambiguity.
\item The unit Euclidean (closed) ball and sphere of $\mathbb{R}^d$ are denoted respectively by $\mathbb{B}^d$ and $\mathbb{S}^{d-1}$.
\item For  $A\subset\mathbb{R}^d$, we set $A^\varepsilon=A + \varepsilon \mathbb{B}^d$. The notation $A_i^\varepsilon$ means $(A_i)^\varepsilon$.
%\item The matrix $I_n$ is the identity matrix $n\times n$.
\end{itemize}

% FUNCTIONAL_FORM       
\section{Functional and semigroup approach}

As already mentioned we  follow Borell's semigroup approach of the Gaussian Brunn-Minkowski inequalities
 (see  \cite{BorellGBM} and  \cite{BorellGBM2}): for parameters $\alpha$ verifying \eqref{alpha}, 
 the plan is two show the functional version of the inequality (the third
assertion of Theorem~\ref{main_theorem}), by means of the heat semigroup. Note that the fourth assertion implies the third
one when choosing $t=1$ and $x_i=0$ in the last equation. So our aim is to establish the fourth assumption. More precisely,
given
Borel functions  $h,f_1,\ldots,f_m$ from  $\mathbb{R}^n$ taking into  $(0,1)$, 
 we define  $C$ on $[0,T]\times(\mathbb{R}^n)^m$ by
\begin{equation*}
C(t,x)=C(t,x_1,\ldots,x_m)= \Phi^{-1}\circ P_t h\big({\textstyle\sum}\alpha_i x_i\big) - \sum \alpha_i \Phi^{-1}\circ P_t f_i(x_i).
\end{equation*}
Since $P_0f=f$ the assumption 
\begin{equation}\label{fgbmi0}
\forall x_i\in\mathbb{R}^n,\quad \Phi^{-1}\circ h\big({\textstyle\sum}\alpha_i x_i\big)\geq \sum \alpha_i \Phi^{-1}\circ f_i(x_i)
\end{equation}
translates as $C(0,\,.\,) \ge 0$. 
Our task is  to prove 
\begin{equation*}\label{Cgbmi}
C(0,\,.\,)\geq0 \quad \Longrightarrow \quad \forall t\geq0,
\ C(t,\,.\,)\geq 0.
\end{equation*}

\subsection{Preliminaries}

When the functions $h$ and $f_i$ are smooth enough, the time evolution of $P_th$ and $P_tf_i$ is described by the heat equation.
This yields a  differential equation satisfied by $C$. Our problem boils down to determine whether this evolution equation
preserves  nonnegative functions. This is clearly related to  the maximum principle for parabolic equations (see e.g. \cite{brezis}).
We will use the following  lemma.

\begin{lemme}\label{lemma:equadiff}
Assume that  $C$ is twice differentiable. If
\begin{equation}\label{equadiff}
\left\{
\begin{array}{l}
\mathrm{Hess} (C) \geq 0\\
\nabla C=0\\
C\leq 0
\end{array}
\right.
\quad\Longrightarrow \quad
\partial_t C \geq 0
\end{equation}
and if for some $T>0$
\begin{equation}\label{infinipositif}
\liminf_{\left| x \right|\to\infty} \left(\inf_{0\leq t \leq T} C(x,t)\right)  \geq 0,
\end{equation}
then
\begin{equation*}
C(0,\,.\,)\geq0 \quad \Longrightarrow \quad \forall t\in[0,T],
\ C(t,\,.\,)\geq 0.
\end{equation*}
\end{lemme}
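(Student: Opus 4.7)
The plan is to argue by contradiction using the standard first-time-of-failure argument from the parabolic maximum principle. I would suppose that, despite $C(0,\cdot)\ge 0$, there exist $t_0\in(0,T]$ and $x_0\in(\mathbb{R}^n)^m$ with $C(t_0,x_0)<0$. Hypothesis \eqref{equadiff} only yields the non-strict bound $\partial_t C\ge 0$ at critical points where $C\le 0$, which is too weak to rule this out directly, so I would introduce a strict perturbation such as
\[
C_\varepsilon(t,x):=C(t,x)+\varepsilon(1+t),
\]
with $\varepsilon>0$ small enough that $C_\varepsilon(t_0,x_0)<0$ while $C_\varepsilon(0,\cdot)\ge\varepsilon>0$.

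Next I would examine the first time at which positivity breaks down,
\[
t^*:=\inf\bigl\{\,t\in[0,T]\,:\,\inf_{x}C_\varepsilon(t,x)\le 0\,\bigr\},
\]
and argue by continuity that $t^*\in(0,t_0]$ with $\inf_x C_\varepsilon(t^*,x)=0$. The coercivity condition \eqref{infinipositif}, applied to $C_\varepsilon$ (which inherits a strict positive liminf at spatial infinity uniformly in $t\in[0,T]$), ensures that this infimum is attained at some point $x^*$. At $(t^*,x^*)$, spatial minimality yields $\nabla C(t^*,x^*)=0$ and $\mathrm{Hess\,} C(t^*,x^*)\ge 0$, while the perturbation gives $C(t^*,x^*)=-\varepsilon(1+t^*)\le 0$. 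Hypothesis \eqref{equadiff} then delivers $\partial_t C(t^*,x^*)\ge 0$, i.e. $\partial_t C_\varepsilon(t^*,x^*)\ge\varepsilon>0$. On the other hand, the definition of $t^*$ forces $C_\varepsilon(t,x^*)\ge 0=C_\varepsilon(t^*,x^*)$ for all $t\le t^*$, so $\partial_t C_\varepsilon(t^*,x^*)\le 0$, producing the desired contradiction.

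The main obstacle I anticipate is purely analytic housekeeping on the unbounded domain $(\mathbb{R}^n)^m$: one needs the spatial infimum of $C_\varepsilon(t^*,\cdot)$ to be realized and the map $t\mapsto\inf_x C_\varepsilon(t,x)$ to be continuous enough that $t^*$ is well defined and the infimum vanishes there. This is exactly what \eqref{infinipositif} is tailored for: transferred to $C_\varepsilon$, it guarantees that for any $\delta<\varepsilon$ the sublevel sets $\{C_\varepsilon\le\delta\}$ stay in a bounded subset of $(\mathbb{R}^n)^m$ uniformly in $t\in[0,T]$, reducing the minimization to a compact set and giving the required continuity. The remaining steps are then routine compactness and uniform-continuity arguments and require no further ideas beyond the perturbation-and-critical-point mechanism described above.
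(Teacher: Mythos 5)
Your proof is correct and follows essentially the same route as the paper: a strict perturbation $C_\varepsilon=C+\varepsilon(\cdot)$ (the paper uses $\varepsilon t$, you use $\varepsilon(1+t)$), the coercivity condition \eqref{infinipositif} to confine possible negative values to a compact set, and a contradiction at a minimizing point via \eqref{equadiff}. The only cosmetic difference is that the paper locates a global space-time minimum of $C_\varepsilon$ while you take a first-time-of-failure viewpoint; both yield the same second-order and time-derivative inequalities at the critical point.
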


%This lemma is  easily understood heuristically:  if a local minimum of $C(t,\,.\,)$ is equal to  0 at time $t$, then  at this point $C=0$, $\nabla C=0$, and $\mathrm{Hess}(C) \geq 0$, hence $\partial_t C \geq 0$. The minimum is "forced" to increase again, the graph of $C$ cannot cross the $xt$-plane. Moreover Condition \eqref{infinipositif} prevents $C$ from  becoming  negative at infinity.

\begin{proof} For $\varepsilon>0$, set $C_\varepsilon(t,x)=C(t,x)+\varepsilon t$ on $[0,T]\times(\mathbb{R}^n)^m$. If $C_\varepsilon<0$ at some point, then $C_\varepsilon$ reaches its minimum at a point $(t_0,x_0)$ where $\nabla C =0$, $\mathrm{Hess}{} (C)\geq 0$, $C<0$, and $\partial_t C + \varepsilon \leq 0$ ($=0$ if $t_0<T$). By the hypotheses, it implies $\partial_t C \geq 0$ which is in contradiction with $\partial_t C \leq -\varepsilon$. So for all $\varepsilon>0$ and $T>0$, $C_\varepsilon$ is non-negative on $[0,T]\times(\mathbb{R}^n)^m$, thus $C$ is non-negative everywhere.
\end{proof}

Property \eqref{infinipositif} is true under mild assumptions on $h$ and $f_i$ which are related to the initial condition $C(0,\,.\,)\geq0$ in the large: 
\begin{lemme}\label{lem:liminf}
If there exist $a_1,\ldots,a_m\in \mathbb{R}$ such that
\begin{itemize}
\item %for $1\leq i\leq m$, 
$\displaystyle\limsup_{\left| x \right|\to\infty} f_i(x) \leq \Phi(a_i)$  
\item $ h \geq \Phi\big({\textstyle\sum} \alpha_i a_i\big)$
\end{itemize}
then for all $T>0$,
\begin{equation*}
\liminf_{\left| x \right|\to\infty} \left(\inf_{0\leq t \leq T} C(x,t)\right)  \geq 0.
\end{equation*}
\end{lemme}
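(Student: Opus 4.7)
The plan is to control the two pieces of
\[
C(t,x) = \Phi^{-1}\circ P_t h\Bigl({\textstyle\sum} \alpha_i x_i\Bigr) - \sum \alpha_i \Phi^{-1}\circ P_t f_i(x_i)
\]
separately, one hypothesis for each piece. First, since $P_t$ is averaging against a Gaussian probability measure it preserves pointwise inequalities; from $h \ge \Phi({\textstyle\sum} \alpha_i a_i)$ one immediately gets $P_t h \ge \Phi({\textstyle\sum} \alpha_i a_i)$, so
\[
\Phi^{-1}\circ P_t h\Bigl({\textstyle\sum} \alpha_i x_i\Bigr) \ge \sum \alpha_i a_i
\]
uniformly in $(t,x)\in[0,T]\times(\mathbb R^n)^m$.

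The key step is to show, for each $i$, that
\[
\limsup_{|x_i|\to\infty}\ \sup_{t\in[0,T]} P_t f_i(x_i)\ \le\ \Phi(a_i).
\]
Given $\varepsilon>0$, pick $R$ so that $f_i\le \Phi(a_i)+\varepsilon$ on $\{|y|\ge R\}$, and split the integral $P_t f_i(x_i)=\int f_i(x_i+\sqrt t\, y)\,d\gamma_n(y)$ according to whether $x_i+\sqrt t\,y$ lies in $B(0,R)$ or outside. Bounding $f_i\le 1$ on the bad part gives
\[
P_t f_i(x_i)\le \Phi(a_i)+\varepsilon+\gamma_n\bigl(\{y:|x_i+\sqrt{t}\,y|<R\}\bigr).
\]
For $|x_i|\ge R$, the last set is empty at $t=0$, and for $t>0$ it is contained in $\{|y|\ge (|x_i|-R)/\sqrt t\}$. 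A standard Gaussian tail estimate bounds its mass by $C_n\bigl(1+(|x_i|-R)/\sqrt t\bigr)^{n-1}\exp\bigl(-(|x_i|-R)^2/(2t)\bigr)$; optimizing over $t\in(0,T]$ one checks that this supremum tends to $0$ as $|x_i|\to\infty$, the exponential decay in $1/t$ overwhelming the polynomial blow-up in $1/\sqrt t$. Letting $\varepsilon\to 0$ proves the claim, whence $\sup_{t\in[0,T]}\Phi^{-1}\circ P_t f_i(x_i)\le a_i+o_i(1)$ as $|x_i|\to\infty$.

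To combine: along any sequence $(t_n,x^{(n)})$ with $t_n\in[0,T]$ and $|x^{(n)}|\to\infty$, extract a subsequence so that for each $i$ either $|x_i^{(n)}|\to\infty$ (in which case the previous step yields $\alpha_i\Phi^{-1}\circ P_{t_n}f_i(x_i^{(n)})\le \alpha_i a_i+o(1)$) or $x_i^{(n)}$ converges and the corresponding term remains bounded by continuity on compacts. Plugged into the definition of $C$, together with the uniform lower bound on $\Phi^{-1}\circ P_{t_n}h$, this gives $\liminf_n C(t_n,x^{(n)})\ge 0$. The principal obstacle I anticipate is the uniformity in $t\in[0,T]$ of the key step down to $t=0$: naively the shifted Gaussian has radius $R/\sqrt t\to\infty$, and only the large centering $|x_i|/\sqrt t\to\infty$, via the Gaussian tail, keeps the mass under control.
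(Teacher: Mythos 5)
Your core strategy matches the paper's: bound $\Phi^{-1}\circ P_t h$ from below by $\sum\alpha_i a_i$ (trivial, since $P_t$ preserves the constant lower bound on $h$), and show $\Phi^{-1}\circ P_t f_i(x_i)\le a_i+o(1)$ uniformly in $t\in[0,T]$ as $|x_i|\to\infty$. The technical route for the second piece differs: the paper integrates over a \emph{fixed} Euclidean ball $r\mathbb B^n$ in the $y$-variable, chosen so $\gamma_n(r\mathbb B^n)=1-\varepsilon$, so the ``bad'' mass is exactly $\varepsilon$ regardless of $t$ and $x_i$, and the ``good'' part lands in $x_i+r\sqrt T\,\mathbb B^n$ which is far from the origin for $|x_i|$ large. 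You instead split on whether the argument $x_i+\sqrt t\,y$ lies in $B(0,R)$, which forces you to estimate $\gamma_n\big(\{y:|x_i+\sqrt t\,y|<R\}\big)$ via a Gaussian tail bound and to check its uniformity over $t\in(0,T]$ by hand. Both are correct on this step, but the paper's choice avoids the tail estimate entirely and makes the uniformity in $t$ immediate; yours is more laborious without buying extra generality.

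There is, however, a genuine gap in your combination step. You extract a subsequence and sort the indices into those with $|x_i^{(n)}|\to\infty$ and those with $x_i^{(n)}$ convergent, and for the latter you only assert that $\alpha_i\Phi^{-1}\circ P_{t_n}f_i(x_i^{(n)})$ ``remains bounded by continuity on compacts.'' Bounded is not enough: the conclusion $\liminf_n C(t_n,x^{(n)})\ge 0$ requires $\sum_i\alpha_i\Phi^{-1}\circ P_{t_n}f_i(x_i^{(n)})\le\sum_i\alpha_i a_i+o(1)$, and a bounded term on a bounded coordinate need not be $\le\alpha_i a_i$ --- the hypotheses only control $f_i$ \emph{at infinity}, and near the origin $f_i$ can be arbitrarily close to $1$, making $\Phi^{-1}\circ P_t f_i(x_i)$ large and positive there. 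So along a sequence where some $x_i^{(n)}$ stays put and another diverges, your argument does not yield a nonnegative liminf. (To be fair, the paper's own proof writes ``for $|x|$ large enough'' and implicitly treats this as if it forced \emph{every} $|x_i|$ to be large; the omission is silent there, whereas you made it explicit and then drew a conclusion the displayed bound does not support. In the paper's actual applications --- e.g.\ the construction in Section 3 with $\Phi(a)\le f_i\le\Phi(b)$ and $h\ge\Phi(a_0)$, $a_0>\sum\alpha_i a$ --- the stronger available bounds do close this case, but neither your proof nor the paper's derives it from the stated hypotheses alone.)
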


\begin{proof}
Let $\delta>0$. By continuity of $\Phi^{-1}$, there exists $\varepsilon>0$ such that
\begin{equation*}
%\forall\ 1\leq i\leq m,\quad 
\Phi^{-1}\big(\Phi(a_i)+2\varepsilon\big)\leq a_i + \frac{\delta}{\sum \alpha_j}.
\end{equation*}
Let $r>0$ be such that 
$
\gamma_n\left(r\mathbb{B}^n\right)= 1-\varepsilon.
$
Then, for %$i\geq1$, 
$0\leq t\leq T$,
\begin{align*}
P_t f_i(x_i)&=\int_{r\mathbb{B}^n} f_i(x_i+ \sqrt{t} \, y) \,\gamma_n(dy)
+\int_{\left(r\mathbb{B}^n\right)^\complement} f_i(x_i+ \sqrt{t} \, y) \,\gamma_n(dy)\\
&\leq (1-\varepsilon) \sup_{x_i+r\sqrt{t}\,\mathbb{B}^n}f_i + \varepsilon \sup f_i\\
&\leq  \sup_{x_i+r\sqrt{T}\,\mathbb{B}^n}f_i + \varepsilon \\
&\leq \Phi(a_i)  + 2\varepsilon \quad \text{for $\left| x_i \right|$ large enough}.
\end{align*}
Moreover
$P_t h\geq \Phi\big({\textstyle\sum}\alpha_i a_i\big)$
so  for $\left| x \right|$ large enough and  for $0\leq t \leq T$, it holds 
$
C(t,x)\geq  -\delta.
$
As $\delta>0$ was arbitrary, the proof is complete.
\end{proof}

Checking Property \eqref{equadiff} of Lemma \ref{lemma:equadiff}  requires the  following lemma:
\begin{lemme}\label{lemma:imphiconcave}
Let $d\geq2$, $\alpha_1,\ldots,\alpha_m>0$. Let $k$ be an integer with  $0\leq k \leq m$ and
\[
\varphi: 
\begin{array}[t]{ccc}
(\mathbb{S}^{d-1})^k \times (\mathbb{B}^{d})^{m-k} & \to & \mathbb{R}_+\\
(v_1,\ldots,v_m)\phantom{^{m-k}}&\mapsto&\left| \sum \alpha_{i} v_i \right|
\end{array}
.
\] 
Then the image of $\varphi$ is the interval
\[
J:=
\Bigg[
\max\bigg( \Big\{0\Big\}\cup \Big\{ \alpha_j - \sum_{i\neq j}\alpha_i, 1\leq j\leq k \Big\}\bigg)
 \, ,\, \sum\alpha_i
 \Bigg].
\]
\end{lemme}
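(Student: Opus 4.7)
The plan is to combine three ingredients: continuity of $\varphi$, compactness and connectedness of the domain $D := (\mathbb{S}^{d-1})^k \times (\mathbb{B}^d)^{m-k}$ (connectedness requires $d \ge 2$), and sharp (reverse) triangle inequalities to identify the two endpoints. The first two immediately give that $\varphi(D)$ is a compact interval in $\mathbb{R}_+$, so it suffices to pin down its maximum and minimum.

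The maximum is immediate: by the triangle inequality $\varphi(v) \le \sum \alpha_i |v_i| \le \sum \alpha_i$, with equality when all $v_i$ equal a common unit vector $e$. For the minimum, the reverse triangle inequality gives, for each $j \le k$ (so that $|v_j|=1$),
$$\varphi(v) \ge \alpha_j - \sum_{i\ne j}\alpha_i|v_i| \ge \alpha_j - \sum_{i\ne j}\alpha_i,$$
and of course $\varphi \ge 0$, hence $\varphi \ge \min J$. If $\min J=\alpha_{j^*}-\sum_{i\ne j^*}\alpha_i>0$ for some $j^*\le k$, then attainment is immediate: set $v_{j^*}:=e$ and $v_i:=-e$ for all $i\ne j^*$; a direct computation gives $\sum\alpha_i v_i=(\min J)\,e$.

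The only subtle case is $\min J = 0$, meaning $\alpha_j \le \sum_{i \ne j}\alpha_i$ for every $j \le k$; here I must produce a configuration with $\sum \alpha_i v_i = 0$. Writing $S := \sum_{i\le k}\alpha_iv_i$ and $B := \sum_{i>k}\alpha_iv_i$, the ball part $B$ ranges over the entire ball of radius $\sum_{i>k}\alpha_i$, so it suffices to find unit vectors $v_1,\ldots,v_k$ with $|S| \le \sum_{i>k}\alpha_i$ and then align $v_{k+1},\ldots,v_m$ with $-S$ to cancel it out. This reduces the problem to the pure-sphere subcase of the lemma (i.e.\ $k=m$), which the same compactness/connectedness argument combined with an easy induction on $k$ (combining two unit vectors into a single vector of norm varying continuously in $[|\alpha_{k-1}-\alpha_k|,\alpha_{k-1}+\alpha_k]$) resolves: $|S|$ takes every value in $\bigl[\max(0,\max_{j\le k}(2\alpha_j-\sum_{i\le k}\alpha_i)),\,\sum_{i\le k}\alpha_i\bigr]$. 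The hypothesis $\alpha_j \le \sum_{i \ne j}\alpha_i$ translates exactly into $2\alpha_j - \sum_{i\le k}\alpha_i \le \sum_{i>k}\alpha_i$, which is precisely what is needed so that the lower endpoint of this range is $\le \sum_{i>k}\alpha_i$. I expect this absorption/reduction step to be the principal obstacle; the remaining arguments are routine.
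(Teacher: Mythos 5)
Your proof is correct, but it follows a genuinely different route from the paper's. Both start the same way (continuity plus compactness and connectedness of the domain give that the image is a segment; the triangle inequality identifies the endpoints as upper and lower bounds). Where you differ is in proving that the lower bound $\min J$ is attained. The paper applies Lagrange multipliers to $\varphi^2$ at a minimizer, concludes that either $\sum\alpha_i v_i=0$ or the $v_i$ are collinear unit vectors, and then eliminates configurations with $|S_+|\geq 2$ by a clever planar rotation that strictly decreases $\varphi$. You instead argue constructively: when $\min J>0$ you exhibit the minimizing collinear configuration directly (which is essentially the endpoint the paper's rotation argument funnels down to), and when $\min J=0$ you absorb the ball factors and reduce to the pure sphere case $k=m$, which you resolve by a ``polygon-inequality'' style induction. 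Your route avoids Lagrange multipliers and the rotation argument entirely and is more elementary; the price is that the induction step (minimizing $\max\bigl(0,\,m_1(\beta),\,m_2(\beta)\bigr)$ over $\beta\in[|\alpha_{k-1}-\alpha_k|,\,\alpha_{k-1}+\alpha_k]$ after merging the last two sphere variables, and matching the result to the claimed formula) involves more casework than the word ``easy'' suggests, though it does go through. The paper's argument is shorter once one is comfortable with constrained critical points, but your version makes the attainment entirely explicit.
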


\begin{proof}
As $\varphi$ is continuous on a compact connected set,  $\mathrm{Im}(\varphi)=[\min\varphi,\max\varphi]$.
Plainly $|\sum \alpha_i v_i|\le \sum \alpha_i,$ with equality if $v_1=\cdots=v_m$ is a 
unit vector. So  $\max\varphi =\sum_i \alpha_i$.
For all $j\le k$,  since $|v_j|=1$,  the triangle inequality gives
\[
 \left| \sum \alpha_i v_i \right| \ge   \alpha_j |v_j|- \sum_{i\neq j} \alpha_i|v_i| \ge \alpha_j-  \sum_{i\neq j} \alpha_i.
\]
Hence  $\mathrm{Im}(\varphi)\subset J$ and these two segments have the same upper bound. Next we deal with the lower bound.
 Let us consider a point $(v_1,\ldots,v_m)$  where $\varphi$ achieves its minimum, and differentiate:

 For $j\le k$, $v_j$ lies in the unit sphere.  Applying Lagrange multipliers theorem to $\varphi^2$ with respect to $v_j$  gives a 
 real number $\lambda_j$ such that, 
\begin{equation}\label{eq:lagrange}
\alpha_j\sum_i \alpha_i v_i
=
\lambda_j
v_j.
\end{equation}

  For $j>k$, the $j$-th variable lives in $\mathbb B^d$. If $|v_j|<1$ the minimum is achieved at an interior point and
  the full gradient on $\varphi^2$ with respect to the $j$-th variable is zero. Hence $\sum_i \alpha_i v_i=0$.
  On the other hand if at the minimum $|v_j|=1$, differentiating in the $j$-th variable only along the unit sphere  gives again
  the existence of $\lambda_j\in \mathbb R$ such that \eqref{eq:lagrange} is verified.
    
Eventually, we face 2 cases:
\begin{enumerate}
\item Either $\sum \alpha_i v_i=0$ and $\min \varphi=0$. In this case, the triangle inequality gives $0=|\sum \alpha_i v_i|\ge
   \alpha_j-\sum_{i\neq j} \alpha_i$ whenever $j\le k$. 
\item Or the $v_i$'s are  colinear  unit vectors and
 there exists a partition ${S_+}\cup%\amalg
 {S_-} = \{1,\ldots,m\}$ and a unit vector $v$ such that
\[ 
\min\varphi= \Big|\sum_{S_+} \alpha_i v - \sum_{S_-} \alpha_i v \Big|=\sum_{S_+} \alpha_i - \sum_{S_-} \alpha_i >0.
\] 
Assume that ${S_+}$ contains 2 indices $j$ and $\ell$. Let $e_1$ and $e_2$ be 2 orthonormal vectors of $\mathbb{R}^d$ and let us denote by $R(\theta)$ the rotation in the plane $\mathrm{Vect} (e_1,e_2)$ of angle $\theta$.
The length of the vector $\alpha_j R(\theta)e_1+\alpha_\ell e_1$ is a decreasing and continuous function of $\theta\in [0,\pi]$.
Denote by $U(\theta)$ the rotation in the plane  $\mathrm{Vect} (e_1,e_2)$ which maps this vector to $|\alpha_j R(\theta)e_1+\alpha_\ell e_1|e_1$.
Then
$$ \alpha_j U(\theta)R(\theta)e_1+\alpha_\ell U(\theta)e_1+\sum_{{S_+}\setminus\{j,\ell\}}\alpha_i e_1 -\sum_{S_-} \alpha_i e_1
 = \lambda(\theta) e_1,$$
where $\lambda(0)=\sum_{S_+}\alpha_i-\sum_{S_-}\alpha_i=\min \varphi>0$ and $\lambda$ is continuous 
and decreasing in $\theta \in [0,\pi]$. This contradicts the minimality of $\min \varphi$.
 So ${S_+}$ contains a single index $j$ and
 $$\min\varphi=\Big|\alpha_j v - \sum_{i\neq j} \alpha_i v \Big|=\alpha_j - \sum_{i\neq j} \alpha_i>0.$$
Note that necessarily $j\le k$, otherwise one could get a shorter vector by replacing $v_j=v$ by $(1-\varepsilon)v$.
Besides, the condition $\alpha_j - \sum_{i\neq j} \alpha_i>0$ ensures that 
  $\alpha_j>\alpha_\ell$ for $\ell\neq j$. This implies that  for $\ell \neq j$, 
 \[
 \alpha_\ell - \sum_{i\neq \ell} \alpha_i \leq \alpha_\ell-\alpha_j< 0 <\alpha_j - \sum_{i\neq j} \alpha_i.
 \]
 So $\min \varphi=\max\bigg( \Big\{0\Big\}\cup \Big\{ \alpha_j - \sum_{i\neq j}\alpha_i, 1\leq j\leq k \Big\}\bigg)$
 as claimed.
\end{enumerate}
\end{proof}

\subsection{Semigroup proof for smooth functions}\label{sec:thmfunc}
We deal with smooth functions first, in order to ensure that $P_t f_i$ and $P_th$ verify the heat equation.
 This restrictive assumption   will be removed in  Section \ref{sec:sets} where the proof of Theorem \ref{main_theorem}
is completed.
\begin{theoreme}\label{thm:principal}
Let $f_i, i=1,\ldots,m$, and $h$ be twice continuously differentiable functions from  $\mathbb{R}^n$ to $(0,1)$ satisfying the hypotheses of Lemma \ref{lem:liminf}.
 Assume moreover that for $f=f_i \text{ or }h$,
\begin{equation*}
\forall t>0, \forall x\in\mathbb{R}^n,\quad \left|{\nabla f(x+\sqrt{t}\,y)}\right| e^{-\frac{\left| y \right|^2}{2}}\xrightarrow[\left| y \right|\to\infty]{} 0.
\end{equation*}
Let $\alpha_1,\ldots,\alpha_m$ be positive real numbers such that
\begin{equation*}
\sum \alpha_i \geq 1 \quad \text{ and } \quad \forall j,\ \alpha_j-\sum_{i\neq j}\alpha_i \leq 1.
\end{equation*}
If
\begin{equation*}
\forall x_i\in\mathbb{R}^n,\quad  \Phi^{-1}\circ h\big({\textstyle\sum}\alpha_i x_i\big)\geq \sum \alpha_i \Phi^{-1}\circ f_i(x_i),
\end{equation*}
then
\begin{equation*}
\forall t\geq0,\, \forall x_i\in\mathbb{R}^n, \quad  \Phi^{-1}\circ P_t h\big({\textstyle\sum}\alpha_i x_i\big)\geq \sum \alpha_i  \Phi^{-1}\circ P_t f_i(x_i).
\end{equation*}
\end{theoreme}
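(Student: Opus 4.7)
The plan is to apply the parabolic maximum principle (Lemma \ref{lemma:equadiff}) on each strip $[0,T]\times(\mathbb{R}^n)^m$. Property \eqref{infinipositif} is already granted by Lemma \ref{lem:liminf}, and the gradient-decay hypothesis lets us differentiate under the integral defining $P_t$, so $P_t h$ and $P_t f_i$ are $C^2$, stay in $(0,1)$, and solve the heat equation $\partial_t u = \tfrac12 \Delta u$. The whole task thus reduces to the differential implication \eqref{equadiff}: at any point where $\nabla C = 0$, $\mathrm{Hess}(C)\ge 0$ and $C \le 0$, one must derive $\partial_t C \ge 0$.

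Set $H = \Phi^{-1}\circ P_t h$ and $H_i = \Phi^{-1}\circ P_t f_i$. A short calculation using $\Phi''(s) = -s\,\Phi'(s)$ transforms the heat equation for $u$ into the semilinear equation
\begin{equation*}
\partial_t H = \tfrac12 \Delta H - \tfrac12 H\,|\nabla H|^2,
\end{equation*}
and likewise for each $H_i$. Writing $y = \sum_i \alpha_i x_i$, one has $C(t,x) = H(t,y) - \sum_i \alpha_i H_i(t,x_i)$. The identity $\nabla_{x_j} C = \alpha_j\bigl(\nabla H(t,y) - \nabla H_j(t,x_j)\bigr)$ shows that $\nabla C = 0$ forces a common vector $g := \nabla H(t,y) = \nabla H_i(t,x_i)$ for every $i$. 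Substituting into the PDEs yields
\begin{equation*}
\partial_t C = \tfrac12\Bigl[\Delta H(t,y) - {\textstyle\sum}_i \alpha_i \Delta H_i(t,x_i)\Bigr] - \tfrac12\,C\,|g|^2.
\end{equation*}
The last term is non-negative as soon as $C \le 0$, so the issue is to deduce the trace-type bound $\Delta H(t,y) \ge \sum_i \alpha_i \Delta H_i(t,x_i)$ from $\mathrm{Hess}(C)\ge 0$.

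Denote $M = \mathrm{Hess}_y H(t,y)$ and $M_i = \mathrm{Hess}_{x_i} H_i(t,x_i)$. A direct block computation gives, for any $\mathbf{v} = (v_1,\ldots,v_m)\in(\mathbb{R}^n)^m$,
\begin{equation*}
\bigl\langle \mathrm{Hess}(C)\,\mathbf{v},\mathbf{v}\bigr\rangle = \Bigl\langle M\,{\textstyle\sum}_i \alpha_i v_i,\,{\textstyle\sum}_i \alpha_i v_i\Bigr\rangle - {\textstyle\sum}_i \alpha_i\langle M_i v_i, v_i\rangle.
\end{equation*}
The main obstacle is to feed in the right test vectors so that this positive-semidefinite inequality delivers $\mathrm{tr}(M) \ge \sum_i \alpha_i \mathrm{tr}(M_i)$, which is the desired Laplacian bound. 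This is precisely where the hypothesis on $\alpha$ enters in the geometric form of the remark: Lemma \ref{lemma:imphiconcave}, applied with $k = m$ and some $d\ge 2$, furnishes vectors $u_1,\ldots,u_m\in\mathbb{R}^d$ with $|u_i|=1$ for every $i$ and $|\sum_i \alpha_i u_i|=1$. Fixing $\ell \in \{1,\ldots,n\}$ and applying the Hessian inequality to $v_i = u_i^{(k)}\,e_\ell$ for each $k=1,\ldots,d$, then summing in $k$, the relations $\sum_k (u_i^{(k)})^2 = 1$ and $\sum_k\bigl(\sum_i \alpha_i u_i^{(k)}\bigr)^2 = 1$ will collapse the scalar factors and yield $M_{\ell\ell} \ge \sum_i \alpha_i (M_i)_{\ell\ell}$. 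Summing over $\ell$ gives the trace bound, whence $\partial_t C \ge 0$, and the proof is completed by Lemma \ref{lemma:equadiff}.
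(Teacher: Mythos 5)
Your proposal is correct and follows essentially the same line as the paper: derive the semilinear evolution $\partial_t F=\tfrac12(\Delta F-F|\nabla F|^2)$ for $F=\Phi^{-1}\circ P_tf$, split $\partial_t C$ into a first-order piece (controlled by $\nabla C=0$ and $C\le 0$) and the second-order piece $\mathcal S=\Delta H-\sum\alpha_i\Delta F_i$, and invoke Lemma~\ref{lemma:imphiconcave} with $k=m$ to control the latter from $\mathrm{Hess}(C)\ge0$, then finish with Lemmas~\ref{lemma:equadiff} and~\ref{lem:liminf}. The paper packages the second-order step by exhibiting an elliptic operator $\mathcal E=\nabla^*\big((V^*V)\otimes I_n\big)\nabla$ with $\mathcal E C=\mathcal S$; your direct evaluation of $\langle\mathrm{Hess}(C)\,\mathbf v,\mathbf v\rangle$ on $v_i=u_i^{(k)}e_\ell$ followed by summation over $k$ and $\ell$ is exactly the same computation, with your $u_i$ playing the role of the columns of $V$.
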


\begin{proof}
Let us recall that $C$ is defined by
\begin{equation*}
C(t,x)=C(t,x_1,\ldots,x_m)= H\big(t,{\textstyle\sum\alpha_i x_i}\big)-\sum \alpha_i F_i(t,x_i)
\end{equation*}
where we have set
\begin{equation*}
H(t,y)=\Phi^{-1}\circ P_t h(y)\quad \text{and} \quad F_i(t,y)=\Phi^{-1}\circ P_t f_i(y).
\end{equation*}
In what follows, we omit the variables and write $H$ for   $H\big(t,\sum \alpha_i x_i\big)$  and $F_i$ instead of $F_i(t,x_i)$.
With this simplified notation,
\[
\begin{array}{lc>{\displaystyle}l<{\text{\Large\strut}}}
C &=& H-\sum \alpha_i  F_i,\\
\nabla_{x_i} C &=& \alpha_i (\nabla H - \nabla F_i), \\
\nabla_{x_i}\nabla^*_{x_j}C&=&\alpha_i \alpha_j \mathrm{Hess} (H) - \delta_{i j} \alpha_i \mathrm{Hess} (F_i).\\
\end{array}
\]
Moreover, one can use  the property of heat kernel to derive a differential equation for $F_i$ and $H$. Indeed, for any $f$ satisfying hypotheses of the theorem, we can perform an integration by parts so that it holds
\begin{equation*}
\partial_t P_t f = \frac{1}{2} \Delta P_t f.
\end{equation*} 
Then we set $F=\Phi^{-1}\circ P_t f$ and use the identity 
%$(\Phi^{-1})'=1/\Phi'\circ \Phi^{-1}$ and
 $(1/\Phi'(x))'=x/\Phi'(x)$ to show \label{equadiff_F}
\[
\begin{array}{lc>{\displaystyle}l<{\text{\Huge\strut}}}
\partial_t F&=& \frac{\partial_t P_t f}{\Phi'(F)} \ =\ \frac{\Delta P_t f}{2\,\Phi'(F)},\\
\nabla F &=& \frac{\nabla P_t f}{\Phi'(F)}, \\
\Delta F&=&\frac{\Delta P_t f}{\Phi'(F)} + F \frac{\left| \nabla P_t f \right|^2}{(\Phi'(F))^2}.\\
\end{array}
\]
We put all together to get
\begin{equation*}
\partial_tF=\frac{1}{2} \left( \Delta F - F\left| \nabla F \right|^2 \right)
\end{equation*}
and to deduce the following differential equation for $C$:
\begin{equation*}\label{eq:C}
\partial_tC=\frac{1}{2} (\mathcal{S}  + \mathcal{P}  )
\end{equation*}
where the second order part is 
\begin{equation*}\label{eq:epsilon}
\mathcal{S} =  \Delta H - \sum \alpha_i \Delta F_i
\end{equation*}
and the terms of lower order are
\begin{equation*}\label{eq:lower_order}
\mathcal{P}  = - \left( H\left| \nabla H \right|^2 - \sum \alpha_i F_i\left| \nabla F_i \right|^2\ \right) .
\end{equation*}
We will conclude using Lemma \ref{lemma:equadiff}. So we need to check Condition~\eqref{equadiff}.
First we note that $\mathcal{P} $ is non-negative when  $\nabla C = 0$ and $C\leq0$, regardless of $\alpha$. Indeed, $\nabla C = 0$ implies that $\nabla F_i=\nabla H$ for all $i$. So $\mathcal{P} = -\left| \nabla H \right|^2 C$ which is non-negative if $C\leq0$.

It remains to deal with the second order part. It is enough to express $\mathcal{S}$ as $\mathcal{E}C$ for some elliptic operator
 $\mathcal{E}$, since then  $\mathrm{Hess} (C)\geq0$ implies  $\mathcal{S} \geq 0$.
Such a second order operator can be written as $\mathcal{E}=\nabla^* A \nabla$ where $A$ is a symmetric matrix $nm \times nm$. Moreover
 $\mathcal{E}$ is elliptic if and only if $A$ is positive semi-definite. 
In view of the structure of the problem, it is natural to look for matrices of the following block  form
$$ A= B\otimes I_n =(b_{i j} I_n)_{1\leq i,j \leq m}\, ,$$
where $I_n$ is the identity $n\times n$ matrix and $B$ is a positive semi-definite matrix of size $m$.
Denoting $x_i=(x_{i,1},\ldots,x_{i,n})$, 
\begin{align*}
   \mathcal E C&= \sum_{i,j=1}^{m} b_{i,j} \left( \sum_{k=1}^{n} \frac{\partial^2}{\partial x_{i,k}\partial x_{j,k}}C\right) 
     =  \sum_{i,j=1}^{m} b_{i,j} \big(\alpha_i\alpha_j \Delta H -\delta_{i,j} \alpha_i \Delta F_i\big)\\ 
   &= \produitscalaire{\alpha}{B\alpha} \Delta H -\sum_{i=1}^m b_{i,i} \alpha_i \Delta F_i.
\end{align*}
Hence there exists an elliptic operator $\mathcal E$ of the above form such that 
$\mathcal E C= \mathcal S= \Delta H-\sum_{i=1}^{m} \alpha_i \Delta F_i$ if there exits a positive semi-definite matrix $B$
of size $m$ such that 

\[
\produitscalaire{\alpha}{B\alpha}=\produitscalaire{e_1}{Be_1}=\cdots=\produitscalaire{e_m}{Be_m}=1
\] 
where $(e_i)_i$ is the canonical basis of $\mathbb{R}^m$.
Now a positive semi-definite  matrix $B$ can be decomposed into $B=V^*V$ where $V$ is a square matrix of size $m$. Calling $v_1,\ldots,v_m\in\mathbb{R}^m$ the columns of $V$, 
 we can translate the latter  into conditions on vectors $v_i$. 
Actually, we are looking for vectors  $v_1,\ldots, v_m \in \mathbb{R}^m$  with 
\[\left| v_1 \right|=\cdots=\left| v_m \right|=\left| \sum\alpha_i v_i \right|=1.\]
By Lemma \ref{lemma:imphiconcave} for $k=m$, this is possible exactly when $\alpha$ satisfies the claimed condition:
\begin{equation*}
\sum \alpha_i \geq 1 \quad \text{ and } \quad \forall j,\ \alpha_j-\sum_{i\neq j}\alpha_i \leq 1.
\end{equation*}
\end{proof}
The following corollary will be useful in the next section.
\begin{corollaire}\label{cor:concave}
Let $f$ be a function on $\mathbb{R}^n$ taking values in $(0,1)$  and vanishing at infinity, i.e. $\lim_{|x|\to\infty}f(x)= 0$. 
Assume also that
\begin{equation*}
\forall t>0, \forall x\in\mathbb{R}^n,\quad \left|{\nabla f(x+\sqrt{t}\,y)}\right| e^{-\frac{\left| y \right|^2}{2}}\xrightarrow[\left| y \right|\to\infty]{} 0.
\end{equation*}
If $\Phi^{-1}\circ f$ is concave, then $\Phi^{-1}\circ P_t f$ is concave for all $t\geq0$.
\end{corollaire}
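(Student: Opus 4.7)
The plan is to apply Theorem \ref{thm:principal} in the case $m=2$, setting $h=f_1=f_2=f$ and $\alpha_1=\lambda$, $\alpha_2=1-\lambda$, for each fixed $\lambda\in(0,1)$. The parameter conditions $\sum\alpha_i=1\ge1$ and $2\alpha_j-1\le1$ hold trivially. Under this choice the pointwise functional hypothesis
$\Phi^{-1}\circ h(\lambda x_1+(1-\lambda)x_2)\ge\lambda\,\Phi^{-1}\circ f_1(x_1)+(1-\lambda)\,\Phi^{-1}\circ f_2(x_2)$
collapses precisely to the assumed concavity of $\Phi^{-1}\circ f$, while the conclusion of the theorem reads as concavity of $\Phi^{-1}\circ P_t f$ in the two variables $x_1,x_2$ for every $t\ge0$, which is the statement to prove.

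The regularity of $f$ and the gradient-decay assumption are covered by the corollary's hypotheses. The one delicate point is the asymptotic condition that Theorem \ref{thm:principal} inherits from Lemma \ref{lem:liminf}: that lemma demands $a\in\mathbb{R}$ with $\limsup f\le\Phi(a)$ and $f\ge\Phi(a)$ everywhere, which forces $\inf f\ge\Phi(a)>0$, in direct contradiction with $f\to0$ at infinity. So Theorem \ref{thm:principal} does not apply to $f$ directly, and some form of approximation is needed.

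My plan is to produce a family $f_\varepsilon=\Phi\circ g_\varepsilon$ with $g_\varepsilon$ smooth and concave, bounded below, and with $g_\varepsilon\to g:=\Phi^{-1}\circ f$ pointwise as $\varepsilon\to 0$. For each such $f_\varepsilon$ the hypothesis of Lemma \ref{lem:liminf} is satisfied by choosing $a_1=a_2=\inf g_\varepsilon$, so Theorem \ref{thm:principal} applies to $f_\varepsilon$ and yields concavity of $\Phi^{-1}\circ P_t f_\varepsilon$. Constructing $g_\varepsilon$ is the technical crux: the naive candidate $\max(g,-1/\varepsilon)$ is not concave in general, and concave functions on $\mathbb{R}^n$ that are bounded below form a rigid class. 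A workable recipe is to combine a concavity-preserving smoothing (e.g.\ convolution with a nonnegative smooth compactly supported kernel, which preserves concavity as the integrand is concave in $x$ for each fixed argument) with a careful concave extension arranged so that $g_\varepsilon\ge-1/\varepsilon$ everywhere while agreeing with $g$ on larger and larger compact sets.

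The passage to the limit is then routine: $f_\varepsilon\to f$ pointwise and the $f_\varepsilon$ are uniformly bounded, so $P_t f_\varepsilon\to P_t f$ pointwise by dominated convergence, continuity of $\Phi^{-1}$ on $(0,1)$ transfers this to $\Phi^{-1}\circ P_t f_\varepsilon\to\Phi^{-1}\circ P_t f$, and concavity is preserved under pointwise limits. The main obstacle throughout the argument is the approximation step — specifically, reconciling concavity of $\Phi^{-1}\circ f_\varepsilon$ with a strictly positive lower bound on $f_\varepsilon$ so that Lemma \ref{lem:liminf} can be invoked; once this is in hand the semigroup argument of Theorem \ref{thm:principal} and the limit pass require no further ingredients.
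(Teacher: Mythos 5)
Your reduction to Theorem~\ref{thm:principal} with $m=2$, $h=f_1=f_2=f$, $\alpha_1=\lambda$, $\alpha_2=1-\lambda$ is the right starting point, and you correctly spot that Lemma~\ref{lem:liminf} cannot be applied to $f$ directly. But your proposed fix has a fatal flaw: you ask for a family $g_\varepsilon$ that is \emph{smooth, concave on all of $\mathbb{R}^n$, and bounded below}. No such non-constant function exists. A concave function on $\mathbb{R}^n$ bounded below is necessarily constant (restrict to any line: a concave function on $\mathbb{R}$ bounded below is constant). You sense the obstruction when you note that bounded-below concave functions ``form a rigid class,'' but it is not a technical inconvenience to be engineered around --- it is an absolute impossibility. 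In particular your plan of a ``concave extension agreeing with $g$ on larger and larger compact sets while $\ge -1/\varepsilon$'' cannot succeed: if $g$ is not constant on the compact set, concavity forces the extension to go to $-\infty$ along some ray.

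The gap hides a misreading of Lemma~\ref{lem:liminf}. The lemma imposes two asymmetric requirements: a $\limsup$ condition on the $f_i$'s at infinity, and a \emph{global lower bound} on $h$. When $f\to 0$ at infinity, the condition on each $f_i=f$ is automatically satisfied by any choice of $a_i$, since $\limsup f=0\le\Phi(a_i)$. The only problem is the lower bound on $h$, and $h$ need not be $\Phi^{-1}$-concave in Theorem~\ref{thm:principal}. So the cure is to modify $h$ alone, not the $f_i$'s. The paper sets $h=\varepsilon+(1-\varepsilon)f\ge f$ and keeps $f_i=f$: since $f\le 1$, we have $h\ge f$, so the pointwise hypothesis of Theorem~\ref{thm:principal} still follows from concavity of $\Phi^{-1}\circ f$ together with monotonicity of $\Phi^{-1}$; and $h\ge\varepsilon$, so choosing $a_i=\Phi^{-1}(\varepsilon)$ with $\sum\alpha_i=1$ makes Lemma~\ref{lem:liminf} applicable. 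The gradient-decay condition for $h$ is inherited since $\nabla h=(1-\varepsilon)\nabla f$. Then $P_th=\varepsilon+(1-\varepsilon)P_tf\downarrow P_tf$ as $\varepsilon\to 0$, and monotone convergence plus continuity of $\Phi^{-1}$ on $(0,1)$ gives the concavity of $\Phi^{-1}\circ P_tf$. This one-sided perturbation of $h$ is the piece your argument is missing, and it renders the entire approximation-of-$g$ machinery unnecessary.
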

\begin{proof}
Let $1>\varepsilon >0$ and 
 $\alpha_i>0$ with $\sum\alpha_i =1$. Choosing $h=\varepsilon + (1-\varepsilon) f  \ge f$ and $f_i=f$ for $i\geq 1$, 
one can check that the latter theorem applies. Hence  for all $t\geq0$ and $x_i\in\mathbb{R}^n$:
\[
\Phi^{-1}\circ P_t (\varepsilon + (1-\varepsilon) f )\big({\textstyle\sum}\alpha_i x_i\big)\geq \sum \alpha_i  \Phi^{-1}\circ P_t f(x_i).
\]
Letting $\varepsilon$ go to 0, we get by monotone convergence  that $\Phi^{-1}\circ P_t f$ is concave.
\end{proof}

%  CONCAVE_FUNCTIONS

\subsection{$\Phi^{-1}$-concave functions}\label{concave_functions}
 When some  of the $f_i$'s  are $\Phi^{-1}$-concave,  the conditions on the  parameters can be relaxed.
Such functions allow  to approximate characteristic functions of convex sets. They will be useful in Section \ref{sec:sets}.
\begin{theoreme}\label{theorem:concave}
Let $I_{conv} \subset \{1,\ldots,m\}$. 
Let $f_i, i=1,\ldots,m$, and $h$ be twice continuously differentiable functions from  $\mathbb{R}^n$ to $(0,1)$ satisfying the hypotheses of Lemma \ref{lem:liminf}.
 Assume also that for $f=f_i \text{ or }h$,
\begin{equation*}
\forall t>0, \forall x\in\mathbb{R}^n,\quad \left|{\nabla f(x+\sqrt{t}\,y)}\right| e^{-\frac{\left| y \right|^2}{2}}\xrightarrow[\left| y \right|\to\infty]{} 0.
\end{equation*}
Assume  moreover that $\Phi^{-1}\circ f_i$ is concave, decreasing towards $-\infty$ at infinity for all $i\in I_{conv}$.

Let $\alpha_1,\ldots,\alpha_m$ be positive  numbers satisfying
\begin{equation*}
\sum \alpha_i \geq 1 \quad \text{ and } \quad \forall j\notin I_{conv},\ \alpha_j-\sum_{i\neq j}\alpha_i \leq 1.
\end{equation*}
If
\begin{equation*}
\forall x_i\in\mathbb{R}^n,\quad \Phi^{-1}\circ h\big({\textstyle\sum}\alpha_i x_i\big)\geq \sum \alpha_i \Phi^{-1}\circ f_i(x_i),
\end{equation*}
then
\begin{equation*}
\forall t\geq0,\, \forall x_i\in\mathbb{R}^n, \quad \Phi^{-1}\circ P_t h\big({\textstyle\sum}\alpha_i x_i\big)\geq \sum \alpha_i  \Phi^{-1}\circ P_t f_i(x_i).
\end{equation*}
\end{theoreme}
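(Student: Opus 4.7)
The plan is to adapt the semigroup proof of Theorem~\ref{thm:principal} by exploiting that, for $i\in I_{conv}$, the concavity of $\Phi^{-1}\circ f_i$ is preserved by the heat flow (Corollary~\ref{cor:concave}). With $C$, $H$, $F_i$, $\mathcal{S}$, $\mathcal{P}$ defined exactly as in Section~\ref{sec:thmfunc}, the boundary behaviour \eqref{infinipositif} is provided by Lemma~\ref{lem:liminf}, and the zero-order obstruction $\mathcal{P}=-|\nabla H|^2 C$ is still nonnegative when $\nabla C=0$ and $C\le 0$. Everything therefore reduces to showing that $\mathcal{S}\ge 0$ whenever $\mathrm{Hess}(C)\ge 0$.

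For each $i\in I_{conv}$, the hypothesis that $\Phi^{-1}\circ f_i$ is concave and decreases to $-\infty$ at infinity forces $f_i\to 0$ at infinity; combined with the gradient decay assumption, Corollary~\ref{cor:concave} yields that $\Phi^{-1}\circ P_tf_i=F_i(t,\cdot)$ remains concave for every $t\ge 0$, so $\Delta F_i\le 0$. I then look, as in Theorem~\ref{thm:principal}, for an elliptic operator $\mathcal{E}=\nabla^*(B\otimes I_n)\nabla$ with $B$ symmetric positive semi-definite, but with the \emph{relaxed} diagonal system
\[
\produitscalaire{\alpha}{B\alpha}=1,\qquad B_{ii}=1\ \text{for}\ i\notin I_{conv},\qquad B_{ii}\le 1\ \text{for}\ i\in I_{conv}.
\]
The same block-diagonal computation as in the proof of Theorem~\ref{thm:principal} then gives $\mathcal{E}C=\Delta H-\sum_i B_{ii}\alpha_i\Delta F_i$, so that
\[
\mathcal{S}-\mathcal{E}C=\sum_{i\in I_{conv}}(B_{ii}-1)\,\alpha_i\,\Delta F_i\ \ge\ 0,
\]
because $B_{ii}\le 1$, $\alpha_i>0$ and $\Delta F_i\le 0$ on $I_{conv}$. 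Hence $\mathrm{Hess}(C)\ge 0$ forces $\mathcal{E}C\ge 0$ and thus $\mathcal{S}\ge 0$, and Lemma~\ref{lemma:equadiff} finishes the argument.

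The remaining point is the existence of $B$. Writing $B=V^*V$ with columns $v_1,\dots,v_m\in\mathbb{R}^m$, the constraints above translate into
\[
|v_i|=1\ (i\notin I_{conv}),\qquad |v_i|\le 1\ (i\in I_{conv}),\qquad \Big|{\textstyle\sum}\alpha_i v_i\Big|=1.
\]
Relabeling so that the spherical variables come first and applying Lemma~\ref{lemma:imphiconcave} with $d=m$ and $k=|\{1,\dots,m\}\setminus I_{conv}|$, such vectors exist exactly when $1\in J$, i.e.
\[
\max\!\Big(\{0\}\cup\{\alpha_j-{\textstyle\sum}_{i\neq j}\alpha_i:\,j\notin I_{conv}\}\Big)\ \le\ 1\ \le\ {\textstyle\sum}\alpha_i,
\]
which is exactly the hypothesis. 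The main subtlety, and the only genuinely new ingredient compared to the proof of Theorem~\ref{thm:principal}, is the pairing between the weakened diagonal constraint $B_{ii}\le 1$ on $I_{conv}$ and the sign $\Delta F_i\le 0$ coming from the persistent concavity of $\Phi^{-1}\circ P_tf_i$; everything else is a transcription of the previous argument.
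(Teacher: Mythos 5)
Your proposal is correct and follows essentially the same route as the paper: same differential setup for $C$, $\mathcal{S}$, $\mathcal{P}$, same reduction of the second-order term to an elliptic operator $\nabla^*(B\otimes I_n)\nabla$ plus a residue supported on $I_{conv}$ with the right sign thanks to Corollary~\ref{cor:concave}, and the same application of Lemma~\ref{lemma:imphiconcave} to produce the vectors $v_i$. The paper phrases the residue as $-\sum_{i\in I_{conv}}\lambda_i\Delta F_i$ with $\lambda_i=(1-b_{ii})\alpha_i\ge 0$, which is identical to your $\mathcal{S}-\mathcal{E}C=\sum_{i\in I_{conv}}(B_{ii}-1)\alpha_i\Delta F_i\ge 0$.
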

\begin{proof}
As in the proof of Theorem \ref{thm:principal}, we try to apply Lemma \ref{lemma:equadiff} to the equation satisfied by $C$:
\begin{equation*}
\partial_tC(t,x)=\frac{1}{2} (\mathcal{S}  + \mathcal{P}  ).
\end{equation*}
We have already shown that $\mathcal{P} $ is non-negative when  $\nabla C = 0$ and $C\leq0$, for any $\alpha_1,\ldots,\alpha_m$.
 We would like to prove that the conditions on $\alpha$ in the theorem imply  that  $\mathcal{S} $ is 
non-negative whenever $\mathrm{Hess}(C)\geq 0$.

By Corollary \ref{cor:concave}, for all  $i\in I_{conv}$ the function $F_{i}$ is concave, hence $\Delta F_i \leq 0$.
 So we are done if we can write
\[
\mathcal{S}  = \mathcal{E} C - \sum_{i\in I_{conv}}\lambda_i \Delta F_i,
\]
for some elliptic operator  $\mathcal{E}$  and some $\lambda_i\geq0$ . 
As in the proof of the previous theorem, we are looking  for operators of the form  $\mathcal{E}=\nabla^* A \nabla$
with $A=B\otimes I_n=(b_{i j} I_n)_{1\leq i,j \leq m}$ where $B$ is a symmetric positive semi-definite matrix $m\times m$.
Hence our task is to find $B\ge 0$ and $\lambda_i\ge 0$ such that  $\lambda_i=0$ when $i\notin I_{conv}$ and  
\[
 \Delta H - \sum \alpha_i \Delta F_i= \produitscalaire{\alpha}{B\alpha} \Delta H - \sum_i(b_{ii} \alpha_i +\lambda_i) \Delta F_i.
\]
When $i\in I_{conv}$, we can find    $\lambda_i\ge 0$  such that $b_{ii} \alpha_i +\lambda_i=\alpha_i $
whenever  $b_{ii}\leq 1$. Consequently, the problem reduces to finding
 a  positive semi-definite matrix $B$ of size $m\times m$ such that 
\[
\left\{
\begin{array}{l}
\produitscalaire{e_i}{Be_i}\leq1, \quad \forall i\in I_{conv}\\
\produitscalaire{e_i}{Be_i}=1, \quad \forall i\notin I_{conv}\\
\produitscalaire{\alpha}{B\alpha}=1
\end{array}
\right.
\] 
where $(e_i)_i$ is the canonical basis of $\mathbb{R}^m$.
Equivalently, do there exist $v_1,\ldots, v_m \in \mathbb{R}^m$  such that
\[
\left\{
\begin{array}{l}
\left| v_i \right|\leq1,\quad\forall i\in I_{conv}\\
\left| v_i \right|=1,\quad\forall i\notin I_{conv}\\
\left| \sum\alpha_i v_i \right|=1
\end{array}
\right.
\quad ?
\] 
We conclude with Lemma \ref{lemma:imphiconcave}.
\end{proof}
        
\section{Back to sets}\label{sec:sets}

This sections explains how to complete the proof of Theorem~\ref{main_theorem}. The main issue is to get rid of the 
smoothness assumptions made so far.  The  plan of the argument is summed up in the next figure.
The key point is that the conditions on $\alpha$ do not depend on $n$.
\medskip

\hfill
%\begin{center}
\xymatrix@C=.5em@R=1em@M=5pt{
& *+\txt{\phantom{\huge gg}\makebox[0pt]{conditions on $\alpha_i$}\phantom{\huge gg}}
\ar@{=>}[dl]_*+{\txt{\bf a}} & \\
*+\txt{inequality with $P_tf_i$\\ for smooth functions on $\mathbb{R}^{n+1}$}
\ar@{=>}[dr]_*+{\txt{\bf b}}
& &
*+\txt{inequality with $P_tf_i$\\ for Borel functions on $\mathbb{R}^{n}$}
\ar@{=>}[ul]_*+{\txt{\bf d}}\\
& 
*+\txt{\phantom{\huge gg}\makebox[0pt]{inequality}\phantom{\huge gg}\\\makebox[0pt]{for sets $A_i \subset\mathbb{R}^{n+1}$}} 
\ar@{=>}[ur]_*+{\txt{\bf c}} &
}
%\end{center}
\hfill\ 
\medskip

If we can prove the above implications, we will have shown that
\[
\text{\em assertion 1} \Longleftrightarrow \text{\em assertion 2}
\Longleftrightarrow \text{\em assertion 4}
\]
in Theorem \ref{main_theorem}. Moreover, it is clear that 
$\text{\em assertion 4}
\Longrightarrow \text{\em assertion 3}$.
To complete the picture, we can for instance prove $\text{\em assertion 3}
\Longrightarrow \text{\em assertion 1}$ in the same way we do below for the fourth implication.
\bigskip
 
\noindent{\em a- ``Conditions on $\alpha_i$ $\Rightarrow$ inequality with $P_tf_i$ for smooth functions on $\mathbb{R}^{n}$'':}
This implication is nothing else than Theorem  \ref{theorem:concave}. Equivalently, the first assertion
in Theorem~\ref{main_theorem} implies the fourth one restricted to ``smooth'' functions (i.e.  verifying all the 
 assumptions of  the first paragraph of Theorem~ \ref{theorem:concave}).

\medskip
\noindent{\em b- ``Inequality with $P_tf_i$ for smooth functions on $\mathbb{R}^{n}$ $\Rightarrow$ inequality for sets $A_i \subset\mathbb{R}^{n}$'':}
For arbitrary $\alpha$, let
 us prove that the fourth assertion in Theorem~\ref{main_theorem} restricted to smooth functions (in the above-mentioned sense)
 implies the second assertion of the theorem, involving sets.
%\begin{remarque}
%Here "smooth" means who satisfies the hypotheses of smoothness given in Theorems \ref{thm:principal} and \ref{theorem:concave}.
%\end{remarque}
        Let $A_1,\ldots,A_m$ be Borel sets in $\mathbb{R}^n$ with $A_i$ convex when $i\in I_{conv}$. By inner regularity of the measure, we can assume that they are compact. 
        Let $\varepsilon>0$ and $ b>a$ be fixed.
        Then, 
\begin{itemize}
\item for $i\notin I_{conv}$: there exists a smooth function $f_i$ such that $f_i=\Phi(b)$ on $A_i$, $f_i=\Phi(a)$ off $A_i^\varepsilon$, and $0<\Phi(a)\leq f_i\leq \Phi(b)<1$.

\item for $i\in I_{conv}$: there exists a smooth function $f_i$ such that $F_i=\Phi^{-1}\circ f_i$ is concave, $F_i=b$ on $A_i$, $F_i\leq a$ off  $A_i^\varepsilon$, and $F_i \leq b$ on $\mathbb{R}^n$.

For instance, take a point $x_i$ in $A_i$ and define the gauge of $A_i^{\varepsilon/3}$ with respect to $x_i$ by
\begin{equation*}
\rho(x)=\inf\left\{\lambda>0,x_i + \frac{1}{\lambda}(x-x_i) \in A_i^{\varepsilon/3}\right\}.
\end{equation*}
We know that $\rho$ is convex since $A_i$ is convex (see for instance \cite{schnBMT}).%brezis
Then set
\begin{equation*}
\tilde{F_i}(x)= b + c \Big( 1- \max\big(\rho(x)\,,\, 1\big)\Big)
\end{equation*}
where $c>0$ is chosen large enough to insure that $\tilde{F_i}\leq a$ off $A_i^{2\varepsilon/3}$.
Now, we can take a smooth function $g$ with compact support small enough and of integral 1, such that 
$f_i=\Phi\big(\tilde{F_i}*g\big)$ is a smooth $\Phi^{-1}$-concave function satisfying the required conditions.

\item for $h$: set 
        \[
        a_0=\max_{\begin{array}{c}u_i= a \text{ or } b \\ u\neq(b,\ldots,b)\end{array}}         
        \sum\alpha_i u_i \qquad \mbox{and} \qquad
        b_0=\sum\alpha_i b.
        \]
        %Remark that if $a\to-\infty$ then $a_0\to-\infty$.
        Again, we can choose a smooth function $h$ such  that $h=\Phi(b_0)$ on $\sum\alpha_i A_i^\varepsilon$, $h=\Phi(a_0)$ off $\big(\sum\alpha_i A_i^\varepsilon\big)^\varepsilon$, and $0<\Phi(a_0)\leq h\leq \Phi(b_0)<1$.
\end{itemize}   
        From these definitions, the functions $h$ and $f_i$ are ``smooth'' and satisfy 
        \begin{equation*}
        \forall x_i\in\mathbb{R}^n,\quad \Phi^{-1}\circ h\big({\textstyle\sum}\alpha_i x_i\big)\geq \sum \alpha_i \Phi^{-1}\circ f_i(x_i).
        \end{equation*}
       By our hypothesis, the inequality remains valid with $P_th$ and $P_t f_i$ for all $t> 0$. Choosing $t=1$, $x_i=0$ yields
        \begin{equation*}
        \Phi^{-1}\left( \int h \, d\gamma_n\right) \geq \sum \alpha_i \Phi^{-1}\left( \int f_i \, d\gamma_n\right).
        \end{equation*} 
%        Letting first $a\to-\infty$ (so that $a_0\to-\infty$) and then $b\to+\infty$, $\varepsilon\to0$, and making use twice of dominated convergence, we obtain that
 
 Remark here that the functions depends actually of $a$ (respectively $a_0$), $b$ (respectively $b_0$), and $\varepsilon$, possibly in a precise way with a procedure like described above for $f_i$. We could then write $h(a_0,b_0,\varepsilon,.)$ and $f_i(a,b,\varepsilon,.)$.
   
   Letting first $a\to-\infty$ so that $a_0\to-\infty$, we get by dominated convergence
   \begin{equation*}
        \Phi^{-1}\left( \int h(-\infty,b_0,\varepsilon,.) \, d\gamma_n\right) \geq \sum \alpha_i \Phi^{-1}\left( \int f_i(-\infty,b,\varepsilon,.) \, d\gamma_n\right).
        \end{equation*} 
   Now let $(b,\varepsilon)$ tend to $(\infty,0)$. Notice that $f_i(-\infty,\infty,0,.)$ and $h(-\infty,\infty,0,.)$ are characteristic functions. Eventually we obtain, again by dominated convergence, that
        \[
        \Phi^{-1}\circ\gamma_n\Big({\textstyle\sum}\alpha_i A_i\Big)\geq  \sum \alpha_i \Phi^{-1}\circ\gamma_n(A_i).
        \]
        
%\item
\noindent
{\em c- ``Inequality for sets $A_i \subset\mathbb{R}^{n+1}$ $\Rightarrow$ inequality with $P_tf_i$ for Borel functions on $\mathbb{R}^{n}$''}.  Here we assume that the second assumption
of Theorem~\ref{main_theorem} is valid for all Borel sets in $\mathbb{R}^{n+1}$ and we derive the fourth assumption of the 
theorem for functions defined on $\mathbb{R}^n$.

For any Borel function $f$ on $\mathbb{R}^n$ taking values in $[0,1]$, $t>0$, and $x\in\mathbb{R}^n$, we define 
        \[
        B_f^{t,x}=\Big\{(u,y) \,\big|\, u\leq\Phi^{-1}\circ f\big(x+\sqrt{t} \, y\big) \Big\} \subset\mathbb{R}\times\mathbb{R}^{n}.
        \]
        Then it holds
        \[
        \gamma_{n+1}\big(B_f^{t,x}\big)=P_tf(x).
        \]
Let $h,f_1,\ldots,f_n$ be Borel functions on $\mathbb{R}^n$ with  values in $[0,1]$, such that $\Phi^{-1}\circ f_i$ is concave when $i\in I_{conv}$.
Assume that 
\begin{equation*}
\forall x_i\in\mathbb{R}^n,\quad \Phi^{-1}\circ h\big({\textstyle\sum}\alpha_i x_i\big)\geq \sum \alpha_i \Phi^{-1}\circ f_i(x_i).
\end{equation*}
Then for $(u_i,y_i)$ in $B_{f_i}^{t,x_i}$, we get
        \[
        \sum \alpha_i u_i 
        \leq \sum \alpha_i \Phi^{-1}\circ f_i(x_i+\sqrt{t} \, y_i) 
        \leq \Phi^{-1}\circ h\big({\textstyle\sum} \alpha_i (x_i+\sqrt{t} \, y_i)\big)
        \]
        which means that 
        \[
        \sum \alpha_i B_{f_i}^{t,x_i} \subset B_{h}^{t,{\sum} \alpha_i x_i}.
        \]
The same argument shows that $B_{f}^{t,x}$ is convex if $\Phi^{-1}\circ f$ is concave.
Thus, the result for sets in $\mathbb{R}^{n+1}$ implies that
        \[
        \Phi^{-1}\circ P_th\big({\textstyle\sum}\alpha_i x_i\big)\geq 
        \Phi^{-1}\circ \gamma_{n+1}\left({\textstyle\sum} \alpha_i B_{f_i}^{t,x_i}\right)
        \geq \sum \alpha_i\Phi^{-1}\circ P_t f_i(x_i).
        \]

%\item
\medskip
\noindent
{\em d-  ``Inequality with $P_tf_i$ for Borel functions on $\mathbb{R}^{n}$ $\Rightarrow$ conditions on $\alpha_i$'':}
We will prove the contraposed assertion: if the conditions on $\alpha_i$ are violated, then
there exists Borel functions $h$ and $f_i$ such that $\Phi^{-1}\circ f_i$ is concave for $i\in I_{conv}$,
which verify for all $x_i$ the relation $\Phi^{-1}\circ h(\sum \alpha_i x_i)\ge \sum \Phi^{-1}\circ f_i(x_i)$
but for which this inequality is not preserved by $P_t$ for some $t$. Actually since $P_1 f(0)=\int f\, d\gamma$,
it will be enough to exhibit functions such  that
  $$\Phi^{-1}\left(\int h \, d\gamma\right)< \sum \alpha_i \Phi^{-1}\left(\int f_i \, d\gamma\right).$$
Let $f:\mathbb{R}^n\to (0,1)$ be an even   Borel function such that
$$ f(0)>\frac{1}{2}, \quad  \int f  \, d\gamma <\frac12, \quad \mbox{and} \quad F=\Phi^{-1}\circ f \quad \mbox{is concave}.$$
For instance, we may take $f(x)=\Phi\big(1-\left| ax \right|^2\big)$  for $a$ large enough.
Note  that for $0\leq t\leq1$,
\begin{equation}\label{eq:t}
F(tx)\geq tF(x)+(1-t)F(0)\geq tF(x).
\end{equation}
%Set $g=1-f$ and $G=\Phi^{-1}\circ g= -F$. 

 Assume first that $\sum\alpha_i<1$.
Then by concavity and the latter bound, we get  for all $x_i$,
\begin{align*}
\Phi^{-1}\circ f\big({\textstyle\sum_i\alpha_i x_i}\big)& =F\big({\textstyle\sum_i\alpha_i x_i}\big)
\geq \sum_i \frac{\alpha_i}{\sum_j\alpha_j} F\Big(\big({\textstyle\sum_j\alpha_j}\big)\, x_i\Big)\\
&\geq \sum_i \alpha_i F(x_i)=  \sum_i \alpha_i \Phi^{-1}\circ f(x_i).
\end{align*}
However since $1>\sum \alpha_i$ and  $\Phi^{-1}\big(\int f \, d\gamma\big) <0$, it holds
\begin{equation*}
\Phi^{-1}\left(\int f \, d\gamma\right) < \sum_i \alpha_i \Phi^{-1}\left(\int f \, d\gamma\right).
\end{equation*}

Assume now that  there exists $j\notin I_{conv}$ such that $\alpha_j-\sum_{i\neq j}\alpha_i > 1$.
Then using \eqref{eq:t} and concavity again, we obtain for all $x_i$,
\begin{align*}
 \alpha_j F(x_j) &\ge 
    \left(1+  {\textstyle\sum_{i\neq j}} \alpha_i\right) F\left(\frac{\alpha_j x_j}{1+  \sum_{i\neq j} \alpha_i}\right)\\
  &\ge 
 F\left(\alpha_j x_j -  {\textstyle\sum_{i\neq j}} \alpha_i x_i\right) +  \sum_{i\neq j} \alpha_i F(x_i).
\end{align*}
Let $g=1-f$. Since $-F=-\Phi^{-1}\circ f= \Phi^{-1}\circ(1-f)=\Phi^{-1}\circ g$ and $f$ is even
 we may rewrite the latter as
\begin{equation*}
 \Phi^{-1}\circ g\left(\alpha_j x_j + {\textstyle\sum_{i\neq j}} \alpha_i (-x_i)\right)\geq
 \alpha_j \Phi^{-1}\circ g(x_j) +  \sum_{i\neq j} \alpha_i \Phi^{-1}\circ f(-x_i).
 \end{equation*}
However, since $\Phi^{-1}(\int g\, d\gamma)=-\Phi^{-1}(\int f\, d\gamma)>0$ and $\alpha_j-\sum_{i\neq j}\alpha_i>1$ 
it also holds
 \begin{equation*}
\Phi^{-1}\left(\int g \, d\gamma\right) < \alpha_j \Phi^{-1}\left(\int g \, d\gamma\right) + \sum_{i\neq j} \alpha_i \Phi^{-1}\left(\int f \, d\gamma\right).
\end{equation*}
%i.e. 
% \begin{equation*}
%\Phi^{-1}\left(\int g \, d\gamma\right) \geq \left(\alpha_j - {\textstyle\sum_{i\neq j}} \alpha_i\right) \Phi^{-1}\left(\int g \, d\gamma\right).
%\end{equation*}
%Now $\Phi^{-1}\left(\int g \, d\gamma \right)> 0$, so there is some contradiction and $\alpha_j-\sum_{i\neq j}\alpha_i \leq 1$ for all $j\in I_{conv}$.
Therefore the proof is complete.

\section{Further remarks}
\subsection{Brascamp-Lieb type inequalities}
In the previous papers \cite{bore93gpfd,bore00degi}, Borell already used his semigroup approach to
derive variants of the Prékopa-Leindler inequality. The later is a functional counterpart to the Brunn-Minkowski 
inequality for the Lebesgue measure and reads as follows: if $\lambda \in (0,1)$ and $f,g,h:\mathbb R^n\to \mathbb R^+$ are Borel functions such that for all $x,y\in \mathbb R^n$,
$$ h\big(\lambda x + (1-\lambda)y \big)\ge f(x)^\lambda g(y)^{1-\lambda}$$
then $\int h \ge \left(\int f\right)^\lambda \left( \int g\right)^{1-\lambda}$ where the integrals are 
with respect to Lebesgue's  measure. Borell actually showed the following stronger fact:
for all $t>0$ and all $x,y\in \mathbb R^n$
$$ P_t h\big(\lambda x + (1-\lambda)y \big)\ge P_tf(x)^\lambda P_tg(y)^{1-\lambda}.$$
Setting $H(t,\cdot)=\log P_t h$ and defining $F,G$  similarity, it is proved that 
$C(t,x,y):= H\big(t,\lambda x + (1-\lambda)y \big)- \lambda F(t,x)+(1-\lambda) G(t,y)$
satisfies a positivity-preserving evolution equation. The argument is simpler than for Ehrhard's inequality
since the evolution equation of individual functions is simpler: $2\partial_t H= \Delta H +|\nabla H|^2$.

The Brascamp-Lieb \cite{brasl76bcyi,lieb90gkgm} inequality is a powerful extension of  Hölder's inequality. The so-called reverse
Brascamp-Lieb inequality, first proved in \cite{bart97iblc,Barthe-BL}, appears as an extension of the Prékopa-Leindler
inequality. In the paper \cite{bartc04ibli}, it was noted that Borell's semigroup method
could be used to derive the geometric reverse Brascamp-Lieb inequality (which in some sense is a generic 
case, see  \cite{benncct05blif}) for functions of one variable. This observation was also motivated by a proof of the Brascamp-Lieb
inequalities based on semigroup techniques (Carlen Lieb and Loss  \cite{carlll04sayi} for functions of one variable,
and Bennett Carbery Christ and Tao \cite{benncct05blif} for general functions). In this subsection, we take advantage of our
streamlined presentation of Borell's method, and quickly reprove the reverse Brascamp-Lieb inequality
in geometric form, but for functions of several variables. More surprisingly we will recover the 
Brascamp-Lieb from inequalities which are preserved by the Heat flow. The result 
is not new (the inequality for the law of the semigroup appears in the preprint \cite{bartclm06spbl}), but it is 
interesting to have semigroup proofs of the direct and of the reverse inequalities which follow exactly
the same lines. Recall that the transportation argument developed in \cite{Barthe-BL} was providing
the direct and the reverse inequality simultaneously.

The setting of the geometric inequalities is as follows: for $i=1,\ldots,m$ let $c_i>0$ and let $B_i:\mathbb R^N \to \mathbb R^{n_i}$ be linear maps such that $B_iB_i^*=I_{n_i}$ and 
\begin{equation}\label{eq:decomp}
\sum_{i=1}^m c_i B_i^* B_i =I_N.
\end{equation}
These hypotheses were put forward by Ball in connection with volume estimates in convex geometry \cite{ball89vscr}. Note that 
$B_i^*$ is an isometric embedding of $\mathbb R^{n_i}$ into $\mathbb R^N$ and that $B_i^*B_i$ is the orthogonal projection
from $\mathbb R^N$ to $E_i=\mathrm{Im}(B_i^*)$. The Brascamp-Lieb inequality asserts that for all Borel functions  
$f_i:\mathbb R^{n_i}\to \mathbb R^+$ it holds
$$ \int_{\mathbb R^N} \prod_{i=1}^m f_i(B_i x)^{c_i} \, dx \le \prod_{i=1}^m \left(\int_{\mathbb R^{n_i}} f_i\right)^{c_i}.$$
The reverse inequality ensures that 
$$ \int_{\mathbb R^N}^*   \sup\left\{\prod_{i=1}^m f_i(x_i)^{c_i} ;\; x_i\in \mathbb R^{n_i} \mbox{with}
   \sum c_i B_i^*x_i=x\right\} \, dx \ge \prod_{i=1}^m \left(\int_{\mathbb R^{n_i}} f_i\right)^{c_i}.$$

Following \cite{bartc04ibli}, we will deduce the later from the following result. 
\begin{theoreme}\label{th:RBL} If $h:\mathbb R^N \to \mathbb R^+$
and $f_i:\mathbb R^{n_i} \to \mathbb R^+$ satisfy 
\[
\forall x_i\in \mathbb R^{n_i}, \quad 
h\Big(\sum_{i=1}^m c_i B_i^* x_i\Big)\ge \prod_{i=1}^m f_i(x_i)^{c_i}
\]
then 
\[
\forall x_i\in \mathbb R^{n_i}, \quad
P_th\Big(\sum_{i=1}^m c_i B_i^* x_i\Big)\ge \prod_{i=1}^m P_tf_i(x_i)^{c_i}.
\]
\end{theoreme}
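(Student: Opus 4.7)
I would follow the semigroup blueprint of Theorems~\ref{thm:principal} and~\ref{theorem:concave}, with $\log$ playing the role of $\Phi^{-1}$ since the hypothesis is now multiplicative. Set $H(t,y)=\log P_th(y)$, $F_i(t,x_i)=\log P_tf_i(x_i)$, and
\begin{equation*}
C(t,x_1,\ldots,x_m):=H\bigl(t,{\textstyle\sum_i}\,c_iB_i^*x_i\bigr)-\sum_ic_iF_i(t,x_i),
\end{equation*}
so that the hypothesis becomes $C(0,\cdot)\ge 0$ and the goal is $C(t,\cdot)\ge 0$ for all $t\ge 0$. After restricting to smooth $h,f_i$ bounded below by a positive constant with enough decay on gradients at infinity (standard truncation/regularization, handled as in Section~\ref{sec:sets}), Lemma~\ref{lemma:equadiff} reduces the task to showing $\partial_tC\ge 0$ at any point where $\nabla C=0$, $\mathrm{Hess}(C)\ge 0$, $C\le 0$. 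The key auxiliary identity $\partial_t\log P_tf=\tfrac12(\Delta\log P_tf+|\nabla\log P_tf|^2)$ gives
\begin{equation*}
\partial_tC=\tfrac12(\mathcal S+\mathcal P),\qquad \mathcal S=\Delta_yH-{\textstyle\sum_i}c_i\Delta F_i,\qquad \mathcal P=|\nabla_yH|^2-{\textstyle\sum_i}c_i|\nabla F_i|^2.
\end{equation*}

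The lower-order term $\mathcal P$ vanishes identically at every critical point of $C$: writing $\nabla_{x_i}C=c_i(B_i\nabla_yH-\nabla F_i)=0$ gives $\nabla F_i=B_i\nabla_yH$, and then
\begin{equation*}
\sum_ic_i|\nabla F_i|^2=\sum_ic_i\langle B_i^*B_i\nabla_yH,\nabla_yH\rangle=|\nabla_yH|^2
\end{equation*}
thanks to the frame identity $\sum_ic_iB_i^*B_i=I_N$. This is even nicer than in the Ehrhard case, where $\mathcal P\ge 0$ required the extra sign condition $C\le 0$.

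For the second-order part I look for an elliptic operator $\mathcal E=\nabla^*\mathbf A\nabla$ on $\mathbb R^{n_1}\oplus\cdots\oplus\mathbb R^{n_m}$ satisfying $\mathcal EC=\mathcal S$, so that $\mathrm{Hess}(C)\ge 0$ forces $\mathcal S\ge 0$. The natural ansatz is the block matrix $\mathbf A=(B_iB_j^*)_{1\le i,j\le m}$. Using $\nabla_{x_i}\nabla_{x_j}^*C=c_ic_jB_i\mathrm{Hess}_y(H)B_j^*-\delta_{ij}c_i\mathrm{Hess}(F_i)$, the diagonal blocks $\mathbf A_{ii}=B_iB_i^*=I_{n_i}$ produce exactly $-\sum_ic_i\Delta F_i$, and the coefficient matrix in front of $\mathrm{Hess}_y(H)$ is
\begin{equation*}
\sum_{i,j}c_ic_jB_j^*B_jB_i^*B_i=\Bigl(\sum_jc_jB_j^*B_j\Bigr)\Bigl(\sum_ic_iB_i^*B_i\Bigr)=I_N,
\end{equation*}
whose trace against $\mathrm{Hess}_y(H)$ is $\Delta_yH$. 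Positivity of $\mathbf A$ is the key structural observation: for any $w=(w_1,\ldots,w_m)$,
\begin{equation*}
\langle\mathbf Aw,w\rangle=\sum_{i,j}\langle B_j^*w_j,B_i^*w_i\rangle=\Bigl|\sum_iB_i^*w_i\Bigr|^2\ge 0.
\end{equation*}
Hence $\mathcal E$ is elliptic, $\mathcal S=\mathcal EC\ge 0$ under $\mathrm{Hess}(C)\ge 0$, and together with $\mathcal P=0$ we obtain $\partial_tC\ge 0$, so Lemma~\ref{lemma:equadiff} concludes.

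The main obstacle is not the algebra---which is remarkably clean, the two identities $B_iB_i^*=I_{n_i}$ and $\sum_ic_iB_i^*B_i=I_N$ fitting together exactly to give the required PSD structure---but the functional-analytic step of passing from smooth, strictly positive, well-decaying functions (needed to run the heat equation and the maximum principle) to arbitrary Borel nonnegative $h$ and $f_i$. I would handle this exactly as in Section~\ref{sec:sets}: approximate by smooth functions valued in $[\varepsilon,M]$, apply the preceding argument, and pass to the limit by monotone or dominated convergence.
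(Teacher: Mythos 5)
Your proposal is correct and follows essentially the same lines as the paper's proof: same definition of $C$ with $\log$ in place of $\Phi^{-1}$, same observation that $\nabla C=0$ forces $\mathcal P=0$ via the frame identity $\sum_ic_iB_i^*B_i=I_N$ (and indeed that the sign condition $C\le 0$ is not needed here), and the same second-order argument — your block matrix $\mathbf A=(B_iB_j^*)_{i,j}$ is exactly $MM^*$ with $M=\sum_jJ_jB_j$, so your identity $\mathcal S=\mathrm{Tr}(\mathbf A\,\mathrm{Hess}(C))$ and the positivity $\langle\mathbf Aw,w\rangle=|\sum_iB_i^*w_i|^2$ are just a mild repackaging of the paper's $\mathcal S=\mathrm{Tr}\big((\sum_iJ_iB_i)^*\mathrm{Hess}(C)(\sum_jJ_jB_j)\big)\ge 0$.
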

The reverse inequality is obtained as $t\to +\infty$ since for $f$ on $\mathbb R^d$, $P_tf(x)$ is equivalent to $(2\pi t)^{-d/2} \int_{\mathbb R^d} f$. To see it, note that:
\[
P_tf(x)= (2\pi t)^{-d/2} \int_{\mathbb R^d} f(y) \exp\left(\frac{|{x-y}|^2}{2t}\right)\, dy.
\]
 Note also that taking traces in the decomposition of the identity map yields 
$\sum_i c_i n_i=N$.

In order to recover the Brascamp-Lieb inequality, we will show the following theorem.
\begin{theoreme}\label{th:BL} If $h:\mathbb R^N \to \mathbb R^+$
and $f_i:\mathbb R^{n_i} \to \mathbb R^+$ satisfy 
\[
\forall x \in \mathbb R^N, \quad
h(x)\le \prod_{i=1}^m f_i(B_i x)^{c_i},
\]
then 
\[
\forall x \in \mathbb R^N, \quad
P_th(x)\le \prod_{i=1}^m P_tf_i(B_i x)^{c_i}.
\]
\end{theoreme}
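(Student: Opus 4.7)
The plan is to mirror the semigroup proof of Theorem~\ref{th:RBL}, with the inequality reversed. For smooth positive $h$ and $f_i$ with enough decay at infinity to justify differentiating under the heat integral, set $H(t,x)=\log P_th(x)$ and $F_i(t,x_i)=\log P_tf_i(x_i)$, and define on $[0,T]\times\mathbb{R}^N$
\begin{equation*}
C(t,x)=\sum_{i=1}^{m}c_iF_i(t,B_ix)-H(t,x).
\end{equation*}
The assumption reads $C(0,\cdot)\ge 0$. As in Section~\ref{sec:thmfunc}, I would invoke a maximum-principle lemma in the spirit of Lemma~\ref{lemma:equadiff}: it is enough to check that at any point where $\nabla_xC=0$, $\mathrm{Hess}_xC\ge 0$ and $C\le 0$, one has $\partial_tC\ge 0$.

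Using $2\partial_t\log P_tf=\Delta\log P_tf+|\nabla\log P_tf|^2$ together with the identities $\Delta_x[F_i(t,B_i\cdot)]=(\Delta F_i)(t,B_ix)$ and $|\nabla_x[F_i(t,B_i\cdot)]|^2=|\nabla F_i(t,B_ix)|^2$ (both consequences of $B_iB_i^*=I_{n_i}$), one obtains $2\partial_tC=\mathcal S+\mathcal P$ with
\begin{equation*}
\mathcal S=\sum_ic_i(\Delta F_i)(t,B_ix)-\Delta H(t,x),\qquad \mathcal P=\sum_ic_i|\nabla F_i(t,B_ix)|^2-|\nabla H(t,x)|^2.
\end{equation*}
Taking the trace in $\mathrm{Hess}_xC=\sum_ic_iB_i^*(\mathrm{Hess}\,F_i)(B_ix)B_i-\mathrm{Hess}\,H(t,x)$ and invoking $B_iB_i^*=I_{n_i}$ once more gives $\mathcal S=\mathrm{tr}(\mathrm{Hess}_xC)$, so $\mathrm{Hess}_xC\ge 0$ forces $\mathcal S\ge 0$. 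For $\mathcal P$, the equation $\nabla_xC=0$ reads $z:=\nabla H(t,x)=\sum_ic_iB_i^*y_i$ with $y_i:=\nabla F_i(t,B_ix)$; the Ball decomposition $\sum_ic_iB_i^*B_i=I_N$ yields $\sum_ic_i|B_iz|^2=|z|^2$, and Cauchy--Schwarz in the weighted direct sum $\bigoplus_i\mathbb R^{n_i}$ gives
\begin{equation*}
|z|^2=\sum_ic_i\langle y_i,B_iz\rangle\le\Big(\sum_ic_i|y_i|^2\Big)^{1/2}\Big(\sum_ic_i|B_iz|^2\Big)^{1/2}=|z|\Big(\sum_ic_i|y_i|^2\Big)^{1/2},
\end{equation*}
whence $\mathcal P\ge 0$. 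Thus $\partial_tC\ge 0$ at such critical points, with no extra condition on the $c_i$ beyond the Ball hypotheses.

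The main obstacle is not the sign check, which is genuinely simpler than in Theorem~\ref{th:RBL} since there is no $\Phi^{-1}$ nonlinearity and no free parameter to tune: both $\mathcal S\ge 0$ and $\mathcal P\ge 0$ fall out of $B_iB_i^*=I_{n_i}$ and $\sum_ic_iB_i^*B_i=I_N$. The real work is the removal of the smoothness, strict positivity, and decay assumptions needed to apply the maximum principle and to differentiate inside $P_t$. I would deal with this exactly as for Theorem~\ref{thm:principal} and in Section~\ref{sec:sets}: regularize $f_i$ and $h$ to be smooth, strictly positive, bounded, with Gaussian-type decay of gradients (for instance by convolving with a compactly supported mollifier and adding a small Gaussian bump), apply the smooth case, and pass to the limit as the regularization parameter tends to zero by monotone or dominated convergence to recover Theorem~\ref{th:BL} for general Borel nonnegative $f_i$ and $h$.
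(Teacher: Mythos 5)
Your proof is correct and follows the paper's own argument essentially verbatim: same choice of $H$, $F_i$, $C$, the same identification $\mathcal S=\Delta_x C=\mathrm{Tr}(\mathrm{Hess}_x C)$ from $B_iB_i^*=I_{n_i}$, and the same Cauchy--Schwarz step with $\sum_i c_iB_i^*B_i=I_N$ to get $\mathcal P\ge0$ under $\nabla_x C=0$. The paper also defers the regularization/truncation issues in the same way, so your proposal matches its proof in both substance and structure.
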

Again, the limit $t\to +\infty$ yields the Brascamp-Lieb inequality when choosing $h(x)=\prod_{i=1}^m f_i(B_i x)^{c_i}$.
We sketch the proofs the the above two statements, omitting the truncation arguments needed to ensure
Condition  \eqref{infinipositif}.
\begin{proof}[Proof of Theorem~\ref{th:RBL}]
Set $H(t,\cdot)=\log P_th(\cdot)$ and $F_i(t,\cdot)=\log P_tf_i(\cdot)$. As said above, the functions $H$ and $F_i$ satisfy the equation 
$2\partial_t U =\Delta U+|\nabla U|^2$. Set for $(t,x_1,\ldots,x_m)\in \mathbb R^+\times \mathbb R^{n_1}\times
\cdots \times \mathbb R^{n_m}$
$$C(t,x_1,\ldots,x_m):= H\Big(t, \sum_{i=1}^m c_i B_i^* x_i\Big)-\sum_{i=1}^m c_i F_i(t,x_i).$$
By hypothesis $C(0,\cdot)\ge 0$ and we want to prove that $C(t,\cdot)$ is non-negative as well.
As before, we are done if we can show that the three conditions 
$C\le 0$, $ \nabla C=0$, and $\mathrm{Hess}( C)\ge 0$ imply 
that $\partial_t C\ge 0$. Actually one can see that  the condition $C\le 0$ will not be used in the following. 
Omitting variables, 
$$ 2 \partial_t C= \left( \Delta H-\sum c_i \Delta F_i\right)  + \left(|\nabla H|^2-\sum c_i |\nabla F_i|^2\right)=:
\mathcal S+\mathcal P.$$
Straightforward calculations give
\begin{align*}
 \nabla_{x_i} C&= c_i B_i \nabla H -c_i \nabla F_i \quad \mbox{and} \\
  \mathrm{Hess}_{x_i,x_j} (C)&=  c_ic_j B_i \mathrm{Hess}(H) B_j^*-\delta_{i,j} c_i \mathrm{Hess }(F_i).
\end{align*}
Note that the decomposition \eqref{eq:decomp} implies for all $v\in \mathbb R^N$
$$ |v|^2= \produitscalaire{v}{\sum c_i B_i^*B_i v}= \sum c_i |B_i v|^2.$$
Hence, if $\nabla C=0$, the above calculation gives $\nabla F_i =B_i \nabla H$. Consequently
$|\nabla H|^2=\sum c_i |B_i \nabla H|^2=\sum c_i |\nabla F_i|^2$. So $\nabla C=0 \, \Longrightarrow \mathcal P=0$.

Next, we deal with the second order term. Using \eqref{eq:decomp} again
\begin{align*}
 \Delta H &= \mathrm{Tr} \big(\mathrm{Hess}( H)\big) 
     =  \mathrm{Tr} \Big(\big(\sum_i c_i B_i^*B_i\big)\mathrm{Hess}( H) \big(\sum_j c_j B_j^*B_j\big)\Big)\\
      &= \sum_{i,j} \mathrm{Tr} \Big(B_i^*\big(  c_ic_j  B_i \mathrm{Hess}(H)   B_j^*\big)B_j\Big).
\end{align*}
Also note that 
\begin{align*}
   \sum_{i,j} \mathrm{Tr} \Big(B_i^*\big( \delta_{i,j} c_i   \mathrm{Hess} (F_i)\big)B_j\Big)
    &= \sum_i \mathrm{Tr} \big(B_i^*  c_i   \mathrm{Hess} (F_i) B_i\big)\\
    &=
     \sum_i  c_i \mathrm{Tr} \big(    \mathrm{Hess} (F_i) B_iB_i^*\big)
     =\sum_i  c_i \Delta F_i,
\end{align*}
since $B_iB_i^*=I_{n_i}$.
Combining the former and the later and denoting by $J_i$ the canonical embedding of $\mathbb R^{n_i}$ into
$\mathbb R^{n_1+\cdots+n_m}$ we get that
\begin{align*}
\mathcal S &= \Delta H -\sum c_i \Delta F_i = \sum_{i,j} \mathrm{Tr} \big(B_i^* \mathrm{Hess}_{x_i,x_j}( C)  B_j\big)\\
  &= \sum_{i,j} \mathrm{Tr} \Big(B_i^*\big( J_i^*\mathrm{Hess}(C) J_j\big)B_j\Big)
  =% \mathrm{Tr} \Big(\sum_{i,j} B_i^*  J_i^*\mathrm{Hess}  C J_j B_j\Big)\\ &=&
   \mathrm{Tr} \Big(\big(\sum_{i} J_i  B_i\big)^* \mathrm{Hess}(C) \big(\sum_j J_j B_j\big)\Big)
\end{align*}
is non-negative when $\mathrm{Hess}(C)\ge 0$. This is enough to conclude that $C$ remains non-negative.
\end{proof}
\begin{proof}[Proof of Theorem~\ref{th:BL}]
As before we set   $H(t,\cdot)=\log P_th(\cdot)$ and $F_i(t,\cdot)=\log P_tf_i(\cdot)$. For $(t,x)\in \mathbb R^+\times \mathbb R^{N} $
$$C(t,x ):=\sum_{i=1}^m c_i F_i(t,B_i x)- H (t,  x).$$
Omitting variables, $C$ evolves according to the equation
$$ \partial_t C= \left( \sum c_i \Delta F_i -\Delta H\right)+  \left( \sum c_i |\nabla F_i|^2-|\nabla H|^2\right)=:\mathcal S+\mathcal P.$$
Next 
$$ \nabla C= \sum c_i B_i^* \nabla F_i -\nabla H \quad \mbox{and} \quad \mathrm{Hess}( C)= \sum c_i B_i^* \mathrm{Hess} (F_i) B_i- \mathrm{Hess}( H).$$
Taking traces in the later equality and since $B_iB_i^*=I_{n_i}$ we obtain
$$ \Delta C= \sum_i c_i \mathrm{Tr}\Big( \mathrm{Hess}( F_i) B_iB_i^*\Big)-\Delta H=\sum_i c_i \Delta F_i -\Delta H=\mathcal S.$$
Therefore the second order term is clearly elliptic. 

It remains to check that $\nabla C=0$ implies that the first order term $\mathcal P$ is non-negative.
We will need the following easy consequence of the decomposition \eqref{eq:decomp}: if $x_i\in \mathbb R^{n_i}$,
$i=1,\ldots,m$, then $$\Big|\sum c_i B_i^* x_i\Big|^2 \le \sum c_i |x_i|^2.$$
The proof is easy: set $v=\sum c_i B_i^* x_i$. Then by Cauchy-Schwarz
\begin{align*}
  |v|^2&= \produitscalaire{v}{\sum c_i B_i^* x_i} =\sum c_i  \produitscalaire{B_i v}{x_i} \\
  &\le \Big( \sum c_i |B_i v|^2\Big)^{\frac{1}{2}}\Big(\sum c_i |x_i|^2 \Big)^{\frac{1}{2}}.
\end{align*}
But \eqref{eq:decomp} ensures that $|v|^2= \sum c_i |B_i v|^2$ so after simplification we get the claim.
Finally, note that  $\nabla C=0$ means that  $\nabla H=\sum c_i B_i^* \nabla F_i$. Hence
$|\nabla H|^2\le \sum c_i|\nabla F_i|^2$. In other words $\mathcal P\ge 0$. The proof is therefore complete.
\end{proof}

\subsection{Looking for  Gaussian Brascamp-Lieb inequalities}
It is natural to ask about Gaussian versions of the Brascamp-Lieb or inverse Brascamp-Lieb inequalities.
For $0\leq i\leq m$, take a nonzero real $d_i$, a positive integer $n_i\leq N$, a linear surjective map $L_i : \mathbb{R}^N\to\mathbb{R}^{n_i}$, and a Borel function $f_i$ on $\mathbb{R}^{n_i}$ taking value in $(0,1)$. Does the inequality
\[
\forall x \in \mathbb{R}^N,\quad \sum_{i=0}^{m} d_i \Phi^{-1}\circ  f_i (L_ix)\ge 0
\]
upgrade for all $t\geq 0$ to
\[
 \forall x \in \mathbb{R}^N,\quad \sum_{i=0}^{m} d_i \Phi^{-1}\circ  P_t f_i (L_ix)\ge 0 \quad?
\]
This general formulation allows negative $d_i$'s and would encompass Gaussian extensions of Theorem~\ref{th:RBL} or  Theorem~\ref{th:BL}. It also enables  a better understanding of the essential properties in the semigroup argument. Note that from now the index $i$ goes from 0 to $m$, the function $f_0=:h$ playing \emph{a priori} no particular role anymore.

As before, we define for  $t\geq 0$ and $x\in\mathbb{R}^N$, 
\[
C(t,x)=\sum d_i \Phi^{-1}\circ P_t f_i (L_ix)=\sum d_i F_i(t,L_i x)
\]
and we are interested in proving that 
$
C(0,\,.\,) \geq 0$ implies $ C(t,\,.\,) \geq 0 $  for all $ t\ge0$.
Assume that our functions are smooth enough for the next calculations. It holds
\[
\begin{array}{l@{\,}c@{\,}>{\displaystyle}l}
C &=& \sum d_i  F_i,\\
\nabla C &=& \sum d_i L_i^* \nabla F_i, \\
\mathrm{Hess\,}(C) &=& \sum d_i L_i^* \mathrm{Hess\,}(F_i) L_i,\\
\end{array}
\]
and thanks to the Heat equation, $C$ satisfies the following differential equation 
$
2 \partial_tC=  (\mathcal{S}  + \mathcal{P} )
$
where  
\[
\mathcal{S} =\sum d_i \Delta F_i
\mbox{ and }
 \mathcal{P}  =- \sum d_i \left| \nabla F_i \right|^2 F_i.
\]
We require that \label{gbl:requirement}
\begin{equation*}
\left\{
\begin{array}{l}
\mathrm{Hess} (C) \geq 0\\
\nabla C=0\\
C\leq 0
\end{array}
\right.
\quad\Longrightarrow \quad
\left\{
\begin{array}{l}
\mathcal{P}\geq0\\
\mathcal{S}\geq0
\end{array}
\right.\end{equation*}
in order to apply Lemma \ref{lemma:equadiff} (the condition at infinity is verified, provided one restricts to good 
enough functions $f_i$. We omit the details). This request will translate in terms of conditions on the data $(d_i,L_i)$.
We deal separately with the condition for each order:

\begin{description}
\item[First order terms :] note that $(F_i,\nabla F_i)_{i=0,\ldots,m}$ can be chosen arbitrarily for fixed $x$ and $t$; for instance take 
\[
f_i: x_i'\mapsto \produitscalaire{\Phi'(Z_i)Y_i}{x_i'} + \Phi(Z_i) - P_t \tilde{f_i}(L_ix)
\]
with $\tilde{f_i}:x_i'\mapsto \produitscalaire{\Phi'(Z_i)Y_i}{x_i'}$,
so that $F_i(t,L_i x)=Z_i$ and $\nabla F_i(t,L_i x)=Y_i$.

Thus
the condition $(C\le 0,\, \nabla C=0)\Longrightarrow
\mathcal P\ge 0$ boils down to the following relation between polynomials
\[
\left\{\begin{array}{ccc}
\sum d_i Z_i & \leq&0\\
\sum d_i L_i^* Y_i&=&0
\end{array}
\right.
\quad\Longrightarrow \quad \sum d_i \left| Y_i \right|^2 Z_i  \leq 0
\]
where $Z_i$ is a $1-$dimensional unknown and $Y_i$ is an $n_i-$dimensional one. 

Reasoning for fixed $Y_i's$, and viewing the conditions on $Z_i$ as equations of half-spaces, we easily see that the 
later condition is equivalent to
\begin{equation}\label{eq:restriction}
\sum d_i L_i^* Y_i=0 \quad\Longrightarrow \quad \left| Y_0 \right|_{\mathbb{R}^{n_0}}^2=\ldots=\left| Y_m \right|_{\mathbb{R}^{n_m}}^2.
\end{equation}
This condition can be worked out a bit more. Let $\mathcal L:\mathbb R^{\sum n_j}\to \mathbb R^N$ be  defined by 
$$\mathcal L(Y_0,\ldots,Y_m)=\sum d_i L_i^* Y_i.$$
 If $a=(a_0, \ldots,a_m)$ and $b=(b_0, \ldots,b_m)$ belong to $\ker\mathcal L$
then $|a_i|^2$, $|b_i|^2$, and by linearity $|a_i+b_i|^2$ are independent of $i$. Expanding the square of the sum, we deduce 
that $ \produitscalaire{a_i}{b_i}$ is independent of $i$ and therefore equal to the average over $i$ of these quantities.
Hence for all $i$, $(m+1)  \produitscalaire{a_i}{b_i}= \produitscalaire{a}{b}$. This means that $u_i:\ker\mathcal L\to \mathbb R^{n_i}$
defined by $u_i(a)=\sqrt{m+1}\, a_i$ is an isometry. Since $a_i=u_i\big(u_0^{-1}(a_0)\big)$, we conclude that 
$$\ker\mathcal L =\left\{ \Big(a_0,u_1\big(u_0^{-1}(a_0)\big),\ldots,u_m\big(u_0^{-1}(a_0)\big)\Big);\; a_0\in \mbox{Im}(u_0) \right\}.$$
It is then clear that Condition \eqref{eq:restriction} is equivalent to the following: there exists a subspace $X\subset \mathbb R^{n_0}$
and linear isometries $R_i:X\to\mathbb R^{n_i}$, $i\ge 1$ such that 
\begin{equation}
   \label{eq:restriction2}
   \ker\mathcal L =\big\{ (x,R_1x,\ldots,R_mx);\; x\in X\big\}.
\end{equation}

%If we let $B^*=(B_0^*,\ldots,B^*_m)$, this condition can be reformulate in 
%\[
%\ker B^* \subset \left\{ c.a:=(c_0a_0,\ldots,c_ma_m)\in \mathbb{R}^{\sum n_i} ; \left| a_0 \right|_{\mathbb{R}^{n_0}}=\ldots=\left| a_m \right|_{\mathbb{R}^{n_m}}   \right\}.
%\]
%Let us remark that this implies that :
%\begin{itemize}
%\item $\mathrm{dim}(\ker B^*)\leq \min(n_i)$ and $N\geq \sum n_i - \min(n_i)\geq \max(n_i)$; 
%\item if $c.a$ and $c.b$ are in $\ker B^*$, then $\produitscalaire{a_0}{b_0}_{\mathbb{R}^{n_0}}=\ldots=\produitscalaire{a_m}{b_m}_{\mathbb{R}^{n_m}}$.
%\end{itemize}
\item [Second order terms :] we are done if we can find  an elliptic operator $\mathcal{E}$ such that $\mathcal{S}=\mathcal{E}C$.
 In other words we are looking for   a symmetric positive semi-definite matrix  $A$ of size $N\times N$ such that the quantity
$$  \mathrm{Tr\,} \big(A\, \mathrm{Hess} (C)  \big)= \sum d_i \mathrm{Tr\,} \big(A L_i^* \mathrm{Hess} (F_i) L_i  \big)$$
coincides with $\mathcal{S} =\sum d_i \Delta F_i$.
As we require this identity for arbitrary functions  $F_i$,  we can conclude that $A$ does the job if and only if 
 for all $0\leq i\leq m$,
\[L_i A L_i^* = I_{n_i}.\]
Eventually, we may look for $A$ in the form $A=\sigma^* \sigma$ for some square matrix $\sigma$ of size $N$.
For $0\le i\le m$ and $1\le j\le n_i$, denote by $u_i^j\in \mathbb{R}^N$ the columns of $L_i^*$.
Rewriting the later conditions in terms of $\sigma$ we may conclude that: $\mathrm{Hess}(C)\ge 0\, \Longrightarrow \mathcal S \ge 0$
holds provided  there exits a matrix $\sigma$ of size $N$ such that for all $0\le i\le m$ the vectors $(\sigma u_i^j)_{j=1}^{n_i}$
form an orthonormal system in $\mathbb R^N$.
% and $X_i^j=\sigma u_i^j$. Then  $\mathcal{E}$ is elliptic if and only if there exist $(m+1)$ orthonormal families $(X_i^j)_{1\leq j \leq n_i}$ of vectors  of $\mathbb{R}^N$ which satisfy all linear relations satisfied by the $u_i^j$ (which comes actually from the first order condition).
Note that the first order condition requires that the linear relations between the vector $u_i^j$ should have a particular
structure.
\end{description}

The above conditions are quite restrictive. We were able to find data $(d_i,L_i)$ verifying them, but all of them 
could be reduced to the Borell theorem, using the rotation invariance of the Gaussian measure and the fact that
its marginals remain Gaussian. To conclude this section let us briefly explain  why the method does not allow
any new Gaussian improvement of Theorems~\ref{th:RBL} or  \ref{th:BL}.

For $i=1,\ldots, m$, let $c_i>0$ and $B_i:\mathbb R^n \to \mathbb R^{n_i}$ be linear surjective maps.
If we look for Gaussian versions of the Brascamp-Lieb inequality, we are led to apply the previous 
reasoning to $N=n$, $B_0=I_N$, $d_0=-1$, and for $i\ge 1$, $L_i=B_i$ and $d_i=c_i$.
Now, with the above notation, $(Y_0,\ldots,Y_m)\in \ker \mathcal L$ is equivalent to $Y_0=\sum_{i=1}^m c_i B_i^* Y_i$.
Since this condition can be verified even though $|Y_1|\neq |Y_2|$ we conclude that the first order condition
is never satisfied.

Next, we are looking for inequalities of the reverse Brascamp-Lieb type. Hence we choose
$N=n_1+\cdots+n_m$, $d_0=1$, $L_0(x_1,\ldots,x_m)=\sum c_i B_i^* x_i$, and for $i\ge 1$, $d_i=-c_i$, $L_i(x_1,\ldots,x_m)=x_i$.
For $x\in \mathbb R^n$, $L_0^*(x)=(c_1B_1x,\ldots,c_mB_m x)$. For $i\ge 1$ and $x_i\in \mathbb R^{n_i}$, $L_i^*(x_i)=(0,\ldots,0,x_i,
0, \ldots,0)$ where $x_i$ appears at the $i$-th place. The condition $(Y_0,\ldots,Y_m)\in \ker \mathcal L$, 
that is $L_0^*(Y_0)=\sum_{i\ge 1} c_i L_i^*(Y_i)$ becomes:
  $$ \forall i=1,\ldots, m,\; Y_i=B_i Y_0.$$
Hence $\ker\mathcal L=\big\{ (Y_0, B_1 Y_0,\ldots, B_m Y_0);\;  Y_0\in \mathbb R^n\big\}$. So the first order condition 
\eqref{eq:restriction2}
is verified only if the $B_i$'s are isometries. This forces $n_i=n$ and up to an isometric change of variables, we are
back to the setting of the Gaussian Brunn-Minkowski inequality.

\begin{remarque}
To make use of Lemma \ref{lemma:equadiff}, it should be sufficient to prove $(\mathcal{P}+\mathcal{S}\ge 0)$ instead of the stronger condition $(\mathcal{P}\ge0 \mbox{ and } \mathcal{S}\ge0)$ as required page \pageref{gbl:requirement}. However we were not able to translate this into nice conditions on coefficients or functions. In this sense, our semi-group approach fails to extend Theorem \ref{main_theorem} into a more general Gaussian Brascamp-Lieb inequality.
\end{remarque}

\makeatletter\@ifundefined{url}{\def\url#1{\texttt{#1}}}{}\makeatother

\bigskip

\noindent F. BARTHE:
Institut de Math\'ematiques de Toulouse.  Universit\'e Paul Sabatier. 31062 Toulouse, FRANCE.
Email: barthe@math.univ-toulouse.fr

\medskip
\noindent N. HUET:
Institut de Math\'ematiques de Toulouse.  Universit\'e Paul Sabatier. 31062 Toulouse, FRANCE.
Email: huet@math.univ-toulouse.fr
\end{document}